\newcommand{\bb}{\mathbb}
\newcommand{\op}{\operatorname}
\newtheorem{theoremalpha}{Theorem}
\newtheorem{Cor1}[theoremalpha]{Corollary}
\newtheorem{Def-Teo1}{Definition-Theorem}
\newtheorem{question}{Question}
\newtheorem{Teo}{Theorem}[section]
\newtheorem{Prop}[Teo]{Proposition}
\newtheorem{Lem}[Teo]{Lemma}
\theoremstyle{definition}
\newtheorem{Def}[Teo]{Definition}
\newtheorem{Prop-Def}[Teo]{Definition-Proposition}
\newtheorem{Oss}[Teo]{Remark}
\newenvironment{dimo}
         {\textit{Proof of}}
     {\hspace*{\fill}\hspace*{\fill}\mbox{$\Box$}}
\newcommand{\Pic}{\text{Pic}}
\newcommand{\oo}{\mathcal{O}}
\newcommand{\mt}{\mathcal}
\newcommand{\ms}{\mathscr}
\newcommand{\bigslant}[2]{{\raisebox{.2em}{$#1$}\left/\raisebox{-.2em}{$#2$}\right.}}
\begin{document}
\title[The Picard group of the universal abelian variety]{The Picard group of the universal abelian variety and the Franchetta conjecture for abelian varieties}
\author{Roberto Fringuelli and Roberto Pirisi}

\maketitle

\begin{abstract}We compute the Picard group of the universal abelian variety over the moduli stack $\ms A_{g,n}$ of principally polarized abelian varieties over $\mathbb{C}$ with a symplectic principal level $n$-structure. We then prove that over $\mathbb{C}$ the statement of the Franchetta conjecture holds in a suitable form for $\ms A_{g,n}$.
\end{abstract}

\tableofcontents

\section*{Introduction}

Consider the moduli stack $\ms M_g$ of smooth genus $g$ curves. Let $C_{\eta}$ the universal curve over the generic point $\eta$ of $\ms M_g$. The weak Franchetta conjecture says that $\text{Pic}(C_{\eta})$ is freely generated by the cotangent bundle $\omega_{C_{\eta}}$. Arbarello and Cornalba in \cite{AC87} proved it over the complex numbers. Then Mestrano \cite{Franc} and Kouvidakis \cite{Kou91} deduced over $\mathbb C$ the strong Franchetta conjecture, which says that the rational points of the Picard scheme $Pic_{C_\eta}$ are precisely the multiples of the cotangent bundle. Then Schr\"oer \cite{Schroer} proved both the conjectures over an algebraically closed field of arbitrary characteristic. At the end of \emph{loc. cit.}, Schr\"oer poses the question of whether it is possible to generalize the Franchetta Conjecture to other moduli problems.
 
In this paper we focus on the universal abelian variety $\ms X_{g,n}$ over the moduli stack $\ms A_{g,n}$ of principally polarized abelian varieties of dimension $g$ (or p.p.a.v. in short) with a symplectic principal level-$n$ structure (or level-$n$ structure in short). For the analogous of the Franchetta conjecture in this new setting, we have chosen the name of \emph{abelian Franchetta conjecture}.

First of all, observe that the universal abelian variety $\pi:\ms X_{g,n}\to\ms A_{g,n}$ comes equipped with some natural line bundles: 
\begin{itemize}
\item[-] the \emph{rigidified canonical line bundle} $\mt L_{\Lambda}$, i.e. a line bundle, trivial along the zero section, such that over any closed point $(A,\lambda,\varphi)\in\ms A_{g,n}$, the restriction of $\mt L_{\Lambda}$ along the fiber $\pi^{-1}\left((A,\lambda,\varphi)\right)$ induces twice the principal polarization $\lambda$.
\item[-] the \emph{rigidified $n$-roots line bundles}, i.e. the line bundles, trivial along the zero section, which are $n$-roots of the trivial line bundle $\oo_{\ms X_{g,n}}$. 
\end{itemize}

We can formulate the weak abelian Franchetta conjecture in terms of a description of the Picard group of a generic universal abelian variety over $\ms A_{g,n}$ (observe that if $n=1,2$ the generic point of $\ms A_{g,n}$ is stacky, so it makes little sense to speak of ``the'' generic abelian variety), and the strong one in terms of rational sections of the relative Picard scheme. 

\begin{question}[Weak abelian Franchetta Conjecture]
 Is there a principally polarized abelian variety with level-$n$ structure $(A,\Lambda, \varPhi)$ over a field $K$ such that $A \rightarrow \ms X_{g,n}$ is a dominant map, and the Picard group of $A$ is freely generated by the rigidified canonical line bundle and the rigidified $n$-roots line bundles?
\end{question}

\begin{question}[Strong abelian Franchetta Conjecture]
Does every rational section of the relative Picard sheaf $Pic_{\mathscr{X}_{g,n}/\mathscr{A}_{g,n}} \rightarrow \mathscr{A}_{g,n}$ come from one of the elements above?
\end{question}

At first sight, the two question seems different. The reason is that, in general, the relative Picard group of a scheme $f:X\to S$ does not coincide with the $S$-sections of the associated Picard sheaf $\Pic_{X/S}$. However, they are isomorphic, if the scheme $X\to S$ admits a section and the structure homomorphism $\oo_S\to f_*\oo_X$ is universally an isomorphism. Since the universal abelian variety satisfies both these properties, the conjectures are equivalent. In other words, we can formulate the abelian Franchetta conjecture in terms of the set of \emph{rational relative line bundles on $\ms X_{g,n}\to\ms A_{g,n}$}: the set of the equivalence classes of line bundles on $\ms U\times_{\ms A_{g,n}}\ms X_{g,n}\to\ms U$ where $\ms U$ is an open substack of $\ms A_{g,n}$. Two line bundles are in the same class if and only if they are isomorphic along an open subset of $\ms A_{g,n}$. Observe that the tensor product induces a well-defined group structure on this set. Our main result is
\begin{theoremalpha}\label{picxg}Assume that $g\geq 4$ and $n\geq 1$. Then
$$
\bigslant{\Pic(\ms X_{g,n})}{\Pic(\ms A_{g,n})}=
\begin{cases}
\left(\bb Z/n\bb Z\right)^{2g}\oplus \bb Z[\sqrt{\mt L_{\Lambda}}] & \text{if }n \text{ is even,}\\
\left(\bb Z/n\bb Z\right)^{2g}\oplus \bb Z[\mt L_{\Lambda}] & \text{if }n \text{ is odd,}
\end{cases}
$$ 
where $\sqrt{\mt L_{\Lambda}}$ is, up to torsion, a square-root of the rigidified canonical line bundle $\mt L_{\Lambda}$ and $\left(\bb Z/n\bb Z\right)^{2g}$ is the group of rigidified $n$-roots line bundles. Moreover, the line bundle $\sqrt{\mt L_{\Lambda}}$, when it exists, can be chosen symmetric.
\end{theoremalpha}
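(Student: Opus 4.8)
The plan is to first translate the computation of $\bigslant{\Pic(\ms X_{g,n})}{\Pic(\ms A_{g,n})}$ into a statement about the relative Picard scheme. Since $\pi\colon \ms X_{g,n}\to\ms A_{g,n}$ is proper with geometrically connected fibres (so that $\pi_*\oo_{\ms X_{g,n}}=\oo_{\ms A_{g,n}}$ universally) and admits the zero section $e$, pulling back along $\pi$ is split injective and rigidification along $e$ identifies the quotient with the group of sections $\Pic_{\ms X_{g,n}/\ms A_{g,n}}(\ms A_{g,n})$, i.e. with isomorphism classes of line bundles on $\ms X_{g,n}$ trivial along $e$. I would then use the fibrewise decomposition of rigidified bundles on an abelian scheme to obtain an exact sequence
\[
0\longrightarrow \Pic^0_{\ms X_{g,n}/\ms A_{g,n}}(\ms A_{g,n})\longrightarrow \Pic_{\ms X_{g,n}/\ms A_{g,n}}(\ms A_{g,n})\xrightarrow{\ \phi\ } \op{NS}_{\ms X_{g,n}/\ms A_{g,n}}(\ms A_{g,n}),
\]
where $\phi$ records the class in the relative Néron--Severi sheaf (equivalently the homomorphism $\phi_{\mt L}\colon A\to\widehat A$ on fibres). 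The three terms are, respectively, the sections of the dual abelian scheme, the whole group I want, and a lattice; so the work splits into identifying the two outer terms and then the image of $\phi$.

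For the kernel I would identify $\Pic^0_{\ms X_{g,n}/\ms A_{g,n}}$ with the dual abelian scheme $\widehat{\ms X}_{g,n}$, which via the principal polarization is isomorphic to $\ms X_{g,n}$ itself. Its sections over $\ms A_{g,n}$ form the Mordell--Weil group of the universal family, and a monodromy/rigidity argument (the symplectic group acts on the $H_1$ of the fibres with no nonzero invariants for $g\geq 1$, while the level structure trivialises exactly the $n$-torsion) shows that this group is precisely the $n$-torsion $(\bb Z/n\bb Z)^{2g}$; these sections are exactly the rigidified $n$-roots. For the target I would use that the very general principally polarized abelian variety has Picard number one, so that the relative Néron--Severi sheaf has generic stalk $\bb Z[\Theta]$ with $\phi_{[\Theta]}=\lambda$ the principal polarization; since global sections inject into this stalk and the polarization class is monodromy invariant and extends over all of $\ms A_{g,n}$, one gets $\op{NS}_{\ms X_{g,n}/\ms A_{g,n}}(\ms A_{g,n})=\bb Z[\Theta]$. (Here $g\geq 4$ enters through the control of the rational cohomology and Picard group of $\ms A_{g,n}$ needed to pin these invariants down.) At this point $\mt L_{\Lambda}$, which induces $2\lambda$, has class $2[\Theta]$ and lies in the image of $\phi$.

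The heart of the proof --- and the step I expect to be the main obstacle --- is to determine the image of $\phi$, i.e. to decide whether the class $[\Theta]$ itself, equivalently a relative line bundle $\mt M$ with $\phi_{\mt M}=\lambda$, is realised by a genuine line bundle on $\ms X_{g,n}$. I would attack this analytically, writing $\ms A_{g,n}=[\bb{H}_g/\Gamma(n)]$ and $\ms X_{g,n}$ as the corresponding quotient of the universal family over Siegel space, so that a relative line bundle with $\phi=\lambda$ is a $\Gamma(n)$-equivariant theta bundle. Such a bundle exists on $\bb{H}_g$ for every (symmetric) theta characteristic, and it descends precisely when $\Gamma(n)$ fixes the characteristic under the classical affine action of $\op{Sp}_{2g}(\bb Z)$ on characteristics, whose relevant part factors through the action of $\op{Sp}_{2g}(\bb Z/2)$ on the $\widehat A[2]$-torsor of theta characteristics. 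If $n$ is even then $\Gamma(n)\subseteq\Gamma(2)$ acts trivially on $A[2]$, both the linear and the inhomogeneous parts of the action vanish, and the characteristic $(0,0)$ is fixed: a symmetric square root $\sqrt{\mt L_{\Lambda}}$ exists, with $\phi_{\sqrt{\mt L_{\Lambda}}}=\lambda$, whence $(\sqrt{\mt L_{\Lambda}})^{\otimes 2}$ and $\mt L_{\Lambda}$ differ only by an element of $\Pic^0$, i.e. by torsion. If $n$ is odd then $\Gamma(n)$ surjects onto $\op{Sp}_{2g}(\bb Z/2)$, which has no invariant theta characteristic (the even and odd ones form two orbits, each of size $>1$ for $g\geq 2$); hence no symmetric square root descends. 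To rule out a non-symmetric one as well, I would note that a $\Gamma(n)$-invariant $\mt M$ with $\phi_{\mt M}=\lambda$ yields the invariant element $[-1]^*\mt M\otimes\mt M^{-1}\in\Pic^0$, which by the Mordell--Weil computation is $n$-torsion; as $2$ is invertible modulo the odd $n$ this element has a canonical, hence invariant, square root among the level-structure sections, and twisting $\mt M$ by it produces an invariant symmetric theta bundle --- a contradiction. Thus the image of $\phi$ is $\bb Z[\Theta]$ when $n$ is even and $2\bb Z[\Theta]$ when $n$ is odd.

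Finally I would assemble the extension. Since the image of $\phi$ is free of rank one, the sequence splits, giving $\bigslant{\Pic(\ms X_{g,n})}{\Pic(\ms A_{g,n})}\cong(\bb Z/n\bb Z)^{2g}\oplus\bb Z\cdot s$, where $s=[\sqrt{\mt L_{\Lambda}}]$ generates the image for $n$ even and $s=[\mt L_{\Lambda}]$ generates it for $n$ odd; the symmetric representative of $\sqrt{\mt L_{\Lambda}}$ produced above yields the last assertion.
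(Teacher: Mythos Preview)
Your overall architecture matches the paper's: both set up the exact sequence with kernel the Mordell--Weil group of the universal dual abelian variety (computed as $(\bb Z/n\bb Z)^{2g}$) and image contained in the N\'eron--Severi group of a well-chosen fibre (a Jacobian with Picard number one in the paper, a very general p.p.a.v.\ in your sketch), so that the only question left is whether the class $[\Theta]$, and not just $2[\Theta]=[\mt L_\Lambda]$, lies in the image. Your construction of a symmetric theta bundle for $n$ even is likewise the paper's, via a fixed theta characteristic (following Shepherd-Barron).

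Where you genuinely diverge is the $n$ \emph{odd} direction. The paper argues indirectly: if a rigidified $\mt M$ with $\phi_{\mt M}=\lambda$ existed, a Grothendieck--Riemann--Roch computation (after pulling back to $\ms A_{g,4n}$) shows that $(\pi_*\mt M)^{-1}$ is, up to torsion, a square root of the Hodge bundle in $\Pic(\ms A_{g,n})$, and Putman's theorem then rules this out --- it is precisely this input that forces the hypothesis $g\geq 4$. Your route is more direct and more elementary: you first reduce to the symmetric case by the twisting argument (which works because the Mordell--Weil group has odd order, so $2$ is invertible there), and then use that for odd $n$ the group $\Gamma(n)$ surjects onto $\op{Sp}_{2g}(\bb Z/2)$, which fixes no theta characteristic for $g\geq 2$. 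Carried out carefully this actually yields the statement for all $g\geq 2$, not only $g\geq 4$, a range the paper leaves open for odd $n>1$. Correspondingly, your parenthetical ``here $g\geq 4$ enters through the control of the rational cohomology and Picard group of $\ms A_{g,n}$'' is misplaced: nothing in \emph{your} argument uses $g\geq 4$; that constraint is an artefact of the paper's reliance on Putman.

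Two places in your write-up deserve more care. First, your monodromy sketch for the Mordell--Weil group is thin; the paper handles this point carefully, invoking Shioda--Silverberg for $n\geq 3$ and then reducing $n=1,2$ to that case via equivariant approximation (Totaro/Edidin--Graham), since for $n\leq 2$ the stack $\ms A_{g,n}$ has no honest generic point. Second, in your descent step you should make explicit that rigidification kills the automorphisms of the theta bundle, so there is no $H^2(\Gamma(n),\bb C^*)$ obstruction, and you should verify that the affine action of $\op{Sp}_{2g}(\bb Z)$ on characteristics really factors through $\op{Sp}_{2g}(\bb Z/2)$ (both the linear part and the inhomogeneous diagonal terms depend only on $\gamma\bmod 2$), so that your surjectivity claim bites.
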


The main difficulty in proving this theorem resides in the fact that differently from $\ms M_g$, the stack $\ms A_g$ is not generically a scheme. To solve this, we use the techniques of equivariant approximation, first introduced by Totaro, Edidin and Graham in \cite{Equiv}, \cite{Tot}.

However, a description of the entire Picard group $\Pic(\ms X_{g,n})$ is still incomplete. Ideed, while it is well known that the Picard group of $\ms A_g:=\ms{A}_{g,1}$ is freely generated by the Hodge line bundle $\op{det}\left(\pi_*\left(\Omega_{\ms X_{g}/\ms A_{g}}\right)\right)$ (see \cite[Theorem 5.4]{Pu2012}), the same is not true in general. For some results about the Picard group of $\ms A_{g,n}$ the reader can refer to \cite{Pu2012}. 

The above theorem implies directly

\begin{Cor1}\label{franc}$[$Abelian Franchetta conjecture$]$. Assume $g\geq 4$ and $n\geq 1$. The group of rational relative rigidified line bundles on $\ms X_{g,n}\to\ms A_{g,n}$ is isomorphic to
$$
\begin{array}{ll}
\left(\bb Z/n\bb Z\right)^{2g}\oplus \bb Z[\sqrt{\mt L_{\Lambda}}] & \text{if }n \text{ is even,}\\
\left(\bb Z/n\bb Z\right)^{2g}\oplus \bb Z[\mt L_{\Lambda}] & \text{if }n \text{ is odd.}
\end{array}
$$ 
where $\sqrt{\mt L_{\Lambda}}$ is, up to torsion, a square-root of the rigidified canonical line bundle $\mt L_{\Lambda}$ and $\left(\bb Z/n\bb Z\right)^{2g}$ is the group of rigidified $n$-roots line bundles. Moreover, the line bundle $\sqrt{\mt L_{\Lambda}}$, when it exists, can be chosen symmetric.
\end{Cor1}

When $g=2,3$ and $n>1$ we can prove Theorem \ref{picxg} and Corollary \ref{franc} only when $n$ is even. It still remains to find out whether the torsion free part is generated by $\mt L_{\Lambda}$ or $\sqrt{\mt L_{\Lambda}}$, when $n$ is odd.

When $n=1$, we can use the Torelli morphism to extend the result to the genus two and three cases. Let $\ms J_g$ be the universal Jacobian on $\ms M_g$ of degree $0$. We have a cartesian diagram of stacks
$$
\xymatrix{
\ms J_g\ar[r]^{\widetilde \tau_g}\ar[d]&\ms X_g\ar[d]\\
\ms M_g\ar[r]^{\tau_g} & \ms A_g
}
$$
where the map $\tau_g$ is the Torelli morphism. Observe that the Hodge line bundle on $\ms X_{g,n}$ restricts to the Hodge line bundle $\op{det}\left(\pi_*\left(\omega_{\ms M_{g,1}/\ms M_g}\right)\right)$ on $\ms M_g$. In particular the Torelli morphism induces an isomorphism of Picard groups $\Pic(\ms A_g)\cong\Pic(\ms M_g)$ for $g\geq 3$.

We will show that we have an analogous result for the universal families. More precisely

\begin{theoremalpha}\label{corxg}Assume that $g\geq 2$. Then
$$
\bigslant{\Pic(\ms X_{g})}{\Pic(\ms A_{g})}=\mathbb Z[\mt L_{\Lambda}]
$$
Furthermore, when $g\geq 3$, the morphism $\widetilde \tau_g:\ms J_g\to\ms X_g$ induces an isomorphism of Picard groups.
\end{theoremalpha}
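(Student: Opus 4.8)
The plan is to treat the two assertions in tandem, reducing both to a computation of the \emph{rigidified relative Picard groups}. Since $\pi\colon\ms X_g\to\ms A_g$ and $\ms J_g\to\ms M_g$ both carry zero sections, pullback along $\pi$ (respectively along the projection) is split by restriction to the zero section; writing $P(\ms X_g)$ and $P(\ms J_g)$ for the subgroups of classes trivial along the zero section, this yields canonical identifications $\Pic(\ms X_g)\cong\pi^*\Pic(\ms A_g)\oplus P(\ms X_g)$ and $\Pic(\ms J_g)\cong\Pic(\ms M_g)\oplus P(\ms J_g)$, with $P(\ms X_g)=\bigslant{\Pic(\ms X_g)}{\Pic(\ms A_g)}$. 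Because $\widetilde\tau_g$ is compatible with the zero sections, $\widetilde\tau_g^*$ is block-diagonal: on the base summand it is $\tau_g^*$, which is an isomorphism for $g\geq3$ by the quoted result, so the second assertion reduces to showing $\widetilde\tau_g^*\colon P(\ms X_g)\to P(\ms J_g)$ is an isomorphism, and the first assertion for $g=2,3$ reduces to computing $P(\ms J_g)$. For $g\geq4$ the first assertion is immediate from Theorem \ref{picxg} with $n=1$, since $1$ is odd and $\left(\bb Z/1\bb Z\right)^{2g}=0$.

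Next I would compute $P(\ms J_g)$ for every $g\geq2$ and show it equals $\bb Z[\widetilde\tau_g^*\mt L_\Lambda]$, where $\widetilde\tau_g^*\mt L_\Lambda$ restricts to $\oo(2\Theta)$ on each Jacobian fiber, $\Theta$ being the theta divisor, matching the fact that $\mt L_\Lambda$ induces twice the principal polarization. Two inputs feed this: first, a generic Jacobian has Picard number one, so the relative N\'eron--Severi group is $\bb Z\cdot\Theta$ and, for $n=1$, there are no nontrivial rigidified degree-zero classes, giving $P(\ms J_g)\cong\bb Z$; second, and crucially, the generator is $2\Theta$ rather than $\Theta$. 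The point is that a rigidified square root of $\widetilde\tau_g^*\mt L_\Lambda$ would amount to a universal symmetric theta divisor on $\ms J_g$, hence to a universal theta characteristic, i.e. a square root of $\omega_{\ms M_{g,1}/\ms M_g}$ in $\Pic(\ms M_{g,1})$. For $g\geq3$ one has $\Pic(\ms M_{g,1})=\bb Z\lambda\oplus\bb Z\psi$ with $\omega_{\ms M_{g,1}/\ms M_g}=\psi$, which is not divisible by $2$, so no such root exists and $P(\ms J_g)=\bb Z[2\Theta]$.

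With this in hand the remaining assertions follow. For $g\geq3$, $\widetilde\tau_g^*\mt L_\Lambda$ is the generator $2\Theta$ of $P(\ms J_g)$, so $\widetilde\tau_g^*\colon P(\ms X_g)\to P(\ms J_g)$ carries generator to generator and is an isomorphism; combined with the base isomorphism $\tau_g^*$ this proves $\widetilde\tau_g^*\colon\Pic(\ms X_g)\xrightarrow{\sim}\Pic(\ms J_g)$, and transporting the computation of $P(\ms J_3)$ back gives $P(\ms X_3)=\bb Z[\mt L_\Lambda]$. For $g=2$ every curve is hyperelliptic, so the hyperelliptic involution induces $-1$ on the Jacobian and the Torelli morphism is an \emph{open immersion} $\ms M_2\hookrightarrow\ms A_2$ with dense image, the complement being the decomposable locus, with no residual $\mu_2$-gerbe; restricting to this dense open compares the rigidified relative Picard groups, and since the generator $\mt L_\Lambda=2\Theta$ extends globally this yields $P(\ms X_2)=\bb Z[\mt L_\Lambda]$.

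The main obstacle is the distinction between $\Theta$ and $2\Theta$, i.e. the nonexistence of a universal symmetric theta divisor---equivalently of a universal theta characteristic---over $\ms M_g$. This is precisely what forbids a universal square root $\sqrt{\mt L_\Lambda}$ in the $n=1$ case and what makes $\widetilde\tau_g$ induce an \emph{isomorphism} on Picard groups rather than an index-two inclusion; getting it right requires controlling the $-1$ automorphism of Jacobians, which governs both the $\mu_2$-gerbe nature of $\tau_g$ for $g\geq3$ and the symmetry of the relevant line bundles. A secondary subtlety is ruling out extra $2$-torsion classes arising from this automorphism when passing between the two total spaces.
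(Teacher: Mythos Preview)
Your approach diverges from the paper's and has a gap at $g=2$. The paper's argument is uniform in $g\geq2$: fixing a Jacobian $J(C)$ with $\op{NS}(J(C))=\bb Z[\theta]$, it sets up a morphism of short exact sequences comparing the sequence of Proposition~\ref{incl} for $\ms X_g$ (image $m\bb Z\subseteq\op{NS}(J(C))$ for some $m$) with the analogous sequence for $\ms J_g$, whose image is $2\bb Z$ by Kouvidakis' theorem~\cite{Kou91} (cited as a black box, see also~\cite{MV}). Since $\mt L_\Lambda\mapsto 2\theta$ the right vertical map is onto, and were $m=1$ one would obtain a class $\theta$ on $\ms J_g$, contradicting Kouvidakis; hence $m=2$, proving the first assertion for all $g\geq2$ at once. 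The second assertion then drops out of the snake lemma, using that $\tau_g^*\colon\Pic(\ms A_g)\to\Pic(\ms M_g)$ is an isomorphism for $g\geq3$.

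You instead split into cases and try to \emph{re-derive} Kouvidakis' result: a rigidified class $\Theta$ on $\ms J_g$ would give a square root of $\psi$ in $\Pic(\ms M_{g,1})$, impossible for $g\geq3$ since $\Pic(\ms M_{g,1})=\bb Z\lambda\oplus\bb Z\psi$. This is correct in spirit, though the passage from a symmetric line bundle of class $\Theta$ on $\ms J_g$ to an actual line bundle on $\ms M_{g,1}$ squaring to $\omega$ needs justification (the classical bijection between symmetric theta divisors on $J(C)$ and theta characteristics on $C$ has to be made functorial in families). The genuine gap is at $g=2$: your open-immersion argument for $P(\ms X_2)$ presupposes $P(\ms J_2)=\bb Z[2\Theta]$, yet the only reason you give for the generator being $2\Theta$ rather than $\Theta$ is the $\Pic(\ms M_{g,1})$ argument, which you state only for $g\geq3$; the structure of $\Pic(\ms M_{2,1})$ is more delicate because of the hyperelliptic involution. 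The cleanest repair is to cite Kouvidakis for $P(\ms J_g)$ outright, as the paper does. Two smaller points: your invocation of Theorem~\ref{picxg} for $g\geq4$ reverses the paper's expository order (though it is not circular, since the proof of Theorem~\ref{picxg} does not use Theorem~\ref{corxg}); and your ``carries generator to generator'' phrasing at $g=3$ presupposes that $\mt L_\Lambda$ generates $P(\ms X_3)$, which is what you are proving---the correct order is to first embed both $P(\ms X_3)$ and $P(\ms J_3)$ compatibly into $\op{NS}(J(C))$ via restriction, deduce $P(\ms X_3)\subseteq P(\ms J_3)=2\bb Z$ with equality from $\mt L_\Lambda\mapsto 2\theta$, and only then read off the isomorphism.
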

\vspace{0.2cm}
We sketch the strategy of the proof of Theorem \ref{picxg}. For any p.p.a.v. $A$, we have the following exact sequence of abstract groups
\begin{equation}\label{intro}
0\longrightarrow \text{Hom}_{\ms A_{g,n}}(\ms A_{g,n},\ms X^{\vee}_{g,n})\longrightarrow \bigslant{\Pic(\ms X_{g,n})}{\Pic(\ms A_{g,n})}\stackrel{res}{\longrightarrow} \text{NS}(A)
\end{equation}
where $\ms X_{g,n}^\vee$ is the universal dual abelian variety and the second map is obtained by composing the restriction on the Picard group of the geometric fiber of $\ms X_{g,n}\to\ms A_{g,n}$ corresponding to $(A,\lambda,\varphi)$ with the first Chern class map. 

Using the universal principal polarization, we can identify the kernel with the set of sections of the universal abelian variety and we will prove that it is isomorphic to the group $\left(\bb Z/n\bb Z\right)^{2g}$ of the $n$-torsion points. This was obtained by Shioda in the elliptic case and then in higher dimension by Silverberg when $A_{g,n}$ is a variety, i.e. when $n\geq 3$. We will extend their results to the remaining cases. Then, using the universal principal polarization, we will identify $\left(\bb Z/n\bb Z\right)^{2g}$ with the group of rigidified $n$-roots line bundles.

Then, we will focus on the cokernel. We will fix a Jacobian variety $J(C)$ with Neron-Severi group generated by its theta divisor $\theta$. Since the rigidified canonical line bundle $\mt L_{\Lambda}$ restricted to the Jacobian is algebraically equivalent to $2\theta$, the index of the image of $res$ in $\text{NS}(J(C))$ can be at most two. Then, by studying the existence of a line bundle on $\ms X_{g,n}$ inducing the universal principal polarization, we will show that the inclusion $\text{Im}(res)\subset \text{NS}(J(C))$ is an equality if and only if $n$ is even, concluding the proof of Theorem \ref{picxg}.\\

The paper is organized in the following way. In Section \ref{1}, we recall some known facts about abelian varieties and their moduli spaces. In Section \ref{0-deg}, we give an explicit description of the set of sections of the universal abelian variety $\ms X_{g,n}\to\ms A_{g,n}$. Then in Section \ref{prel}, we prove the exactness of the sequence (\ref{intro}) and we give a proof of Theorem \ref{corxg}. Finally, in Section \ref{thediv} we show that the universal principal polarization of $\ms X_{g,n}\to\ms A_{g,n}$ is induced by a line bundle if and only if $n$ is even.\\

We will work with the category of schemes locally of finite type over the complex numbers. The choice of the complex numbers is due to the fact that our computation is based upon the Shioda-Silverberg's computation of the Mordell-Weil group of $\ms X_{g,n}\to\ms A_{g,n}$ for $n\geq 3$ and the Putman's computation of the Picard group of $\ms A_{g,n}$, which are proved over the complex numbers. Moreover, Shioda proved that, in positive characteristic, the Mordell-Weil group of $\ms X_{1,4}\to\ms A_{1,4}$ can have positive rank (see \cite{shioda2}). So, it seems that our statements are not true in positive characteristic, but we do not have any evidence of this.\vspace{0.3cm}\\
\textbf{Acknowledgements}. This paper originated at the 2015 P.R.A.G.MAT.I.C. workshop, in Catania. We would like to wholeheartedly thank the workshop organizers A. Ragusa, F. Russo and G. Zappal\`a, the lecturers and the collaborators for creating such a nice and productive environment. We are very grateful to Filippo Viviani for suggesting the problem, for many helpful suggestions and for his careful review of the paper. We wish to thank Giulio Codogni for many helpful discussions.

\section{The universal abelian variety $\ms X_{g,n}\to\ms A_{g,n}$.}\label{1}
In this section we will introduce our main object of study: the \emph{universal abelian variety $\ms X_{g,n}$ over the moduli stack $\ms A_{g,n}$ of principally polarized abelian varieties with level n-structure}. Before giving a definition, we need to recall some known facts about the abelian schemes. For more details the reader can refer to  \cite{Mum70} and \cite[Chap. 6, 7]{Mum94}.

\begin{Def} A group scheme $\pi:A\to S$ is called an \emph{abelian scheme} if $\pi$ is smooth, proper and the geometric fibers are connected.
\end{Def}
It is known that an abelian scheme is a commutative group scheme and its group structure is uniquely determined by the choice of the zero section. An homomorphism of abelian schemes is a morphism of schemes which sends the zero section in the zero section.

Let $A\to S$ be a projective abelian scheme of relative dimension $g$ and $O_A$ its zero section. Consider the relative Picard functor
$$
\begin{array}{rcl}
\Pic_{A/S}: (Sch/S)&\longrightarrow& (Grp)\\
T\to S &\mapsto & \bigslant{\Pic(T\times_{S}A)}{\Pic(T)}.
\end{array}
$$
We set $\Pic_{A/S\,(Zar)}$, resp. $\Pic_{A/S\,(Et)}$, resp. $\Pic_{A/S\,(fppf)}$ the associated sheaves with respect to the Zariski, resp. \'Etale, resp. fppf topology. Since $A\to S$ has sections, namely the zero section, and the structure homomorphism is universally an isomorphism, the relative Picard functor and the associated sheaves above are all isomorphic (see \cite[Theorem 9.2.5]{FAG}). Moreover, $\Pic_{A/S}$ is isomorphic to the functor of rigidified (i.e. trivial along the zero section) line bundles on $A\to S$
$$
\begin{array}{rcl}
\Pic'_{A/S}: (Sch/S)&\longrightarrow& (Grp)\\
T\to S &\mapsto & \{ \mt L\in \Pic(T\times_{S}A)| \; O_{T\times_{S}A}^*\mt L\cong\oo_T\}
\end{array}
$$
where $O_{T\times_{S}A}$ is the zero section of $T\times_{S}A\to T$ induced by $O_A$. This functor is represented by a locally noetherian group $S$-scheme $Pic_{A/S}$ (see \cite[Theorem 9.4.18.1]{FAG}), called the \emph{relative Picard scheme}. There is a subsheaf $\op{Pic}^0_{A/S}\subset Pic_{A/S}$ parametrizing rigidified line bundles which are algebraically equivalent to $0$ on all geometric fibers. It is represented by an abelian scheme: the \emph{dual abelian scheme} $A^{\vee}\to S$ \cite[9.5.24]{FAG}. By \cite[9.6.22]{FAG}, the definition of dual abelian scheme in \cite{Mum94} coincides with the definition above. From the theory of the Picard functor of an abelian scheme, we have an homomorphism of group schemes over $S$
\begin{equation}\label{ex}
\begin{array}{rcl}
\lambda:Pic_{A/S}&\to&  Hom_S (A,A^{\vee})\\
\mt L&\mapsto& \left(a\mapsto\lambda(\mt L)(a):=t_a^*\mt L\otimes\mt L^{-1}\right)
\end{array}
\end{equation}
where $t_a:A\to A$ is the translation by $a$ (see \cite[Ch. 6, \S2]{Mum94}). The kernel is the dual abelian scheme $A^{\vee}\to S$. In particular, when $S=\op{Spec}(k)$, with $k$ an algebraically closed field, we can identify the image of $\lambda$ with the Neron-Severi group $\op{NS}(A)$ of the abelian variety $A$. 

\begin{Def}
A \emph{principal polarization} $\lambda$ of a projective abelian scheme $A\to S$ is an $S$-isomorphism $\lambda:A\to A^{\vee}$ such that over the geometric points $s\in S$, it is induced by an ample line bundle on $A_s$ via the homomorphism (\ref{ex}) above.  A \emph{principally polarized abelian scheme $(A\to S,\lambda)$} is a projective abelian scheme $A\to S$ together with a principal polarization $\lambda$.
\end{Def}
We denote with $A[n]$ (resp. $A^\vee[n]$) the group of $n$-torsion points (resp. of $n$-roots of the trivial line bundle $\oo_A$). Up to \'etale base change $S'\to S$, they are isomorphic to the locally constant group scheme $(\bb Z/n\bb Z)^{2g}_{S'}$. Observe that a principal polarization induces an isomorphism of group schemes $A[n]\cong A^{\vee}[n]$. For any principally polarized abelian scheme $(A\to S,\lambda)$, we denote with
$$\textbf{e}_n:A[n]\times A^{\vee}[n]\to\bb{\mu}_{n,S}\quad(\text{resp. }\textbf{e}_n^{\lambda}:A[n]\times A[n]\to \mu_{n,S})$$ the Weil pairing (resp. the pairing obtained by composing the Weil pairing with the isomorphism $(Id_A\times_S \lambda)|_{A[n]}$). The first one is non-degenerate, while the second one is non-degenerate and skew-symmetric. 

For the rest of the paper we fix $\zeta_n$ a primitive $n$-root of the unity over the complex numbers. By this choice we have a canonical isomorphism between the constant group $\bb Z/n\bb Z$ and the group $\mu_{n,\bb C}$ of the $n$-roots of unity, which sends $1$ to $\zeta_n$.

\begin{Def} A \emph{symplectic principal level $n$-structure} (or \emph{level $n$-structure} in short) over a principally polarized abelian scheme $(A\to S,\lambda)$ of relative dimension $g$ is a isomorphism $\varphi:A[n]\cong(\bb Z/n\bb Z)_S^{2g}$ such that $\textbf{e}_n^{\lambda}(a,b)=e(\varphi(a),\varphi(b))$, where $\text{e}:(\bb Z/n\bb Z)^{2g}_S\times(\bb Z/n\bb Z)^{2g}_S\to \mu_{n,S}$ is the standard non-degenerate alternating form on $(\bb Z/n\bb Z)_S^{2g}$ defined by the $2g\times 2g$ square matrix $\bigl(\begin{smallmatrix}
0&I_g \\ -I_g&0
\end{smallmatrix} \bigr)$ composed with the isomorphism $\bb Z/n\bb Z\cong\mu_n$ defined by $\zeta_n$.
\end{Def}

After these definitions we can finally introduce our main objects of study.

\begin{Def}Let $\ms A_{g,n}\to (Sch/\bb C)$ the moduli stack whose objects over a scheme $S$ are the triples $(A\to S,\lambda, \varphi)$ where $(A\to S,\lambda)$ is a principally polarized abelian scheme of relative dimension $g$ with a level $n$-structure $\varphi$. A morphism between two triples $(f,h):(A\to S,\lambda, \varphi)\longrightarrow(A'\to S',\lambda', \varphi')$
is a cartesian diagram
$$
\xymatrix{
A\ar[d]\ar[r]^{f} &A'\ar[d]\\
S\ar[r]^h &S'
}
$$
such that $f(O_A)=O_A'$, where $O_A$ (resp. $O_A'$) is the zero section of $A$ (resp. $A'$), $f^{\vee}\circ\lambda'\circ f=\lambda$ and $\varphi'\circ f_{A[n]}=\left(\text{Id}_{(\bb Z/n\bb Z)^{2g}}\times h\right)\circ\varphi$.
\end{Def}

A proof of the next theorem can be obtained adapting the arguments in \cite[ch. 7]{Mum94}.

\begin{Teo}For any $n\geq 1$, $\ms A_{g,n}$ is an irreducible smooth Deligne-Mumford stack of dimension $\frac{g(g+1)}{2}$. Moreover, if $n\geq 3$ it is a smooth quasi-projective variety.
\end{Teo}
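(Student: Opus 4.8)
The plan is to combine the analytic uniformization of principally polarized abelian varieties over $\bb C$ with the deformation theory of abelian schemes, treating the four assertions — algebraicity as a Deligne–Mumford stack, smoothness together with the dimension count, irreducibility, and quasi-projectivity for $n\geq 3$ — in turn.

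First I would establish that $\ms A_{g,n}$ is a Deligne–Mumford stack. Algebraicity is standard: the stack of principally polarized abelian schemes is algebraic, since one rigidifies by a high power of the polarization line bundle and realizes it as a quotient of a locally closed subscheme of a Hilbert scheme by a $\op{PGL}$-action, exactly as in \cite[Ch. 7]{Mum94}; imposing a level-$n$ structure then amounts to passing to the finite étale cover of $\ms A_g$ parametrizing symplectic isomorphisms $A[n]\cong(\bb Z/n\bb Z)^{2g}$, which keeps the stack algebraic. To see it is Deligne–Mumford in characteristic $0$ it suffices to check that the automorphism groups are finite: an automorphism of $(A,\lambda,\varphi)$ is an automorphism of the abelian variety preserving the ample class defining $\lambda$, and positivity of the Rosati involution forces this group to be finite. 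Hence the diagonal is unramified and $\ms A_{g,n}$ is Deligne–Mumford.

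Next I would prove smoothness and compute the dimension by deformation theory. By the Serre–Tate and Grothendieck theory, abelian schemes lift freely and the deformation functor of a principally polarized abelian variety is formally smooth, with tangent space canonically $\op{Sym}^2 H^1(A,\oo_A)\cong\op{Sym}^2(T_0A)$ — the last isomorphism using the polarization — which has dimension $\tfrac{g(g+1)}{2}$; a level-$n$ structure spreads out uniquely along an étale neighbourhood and so affects neither smoothness nor dimension. Over $\bb C$ this is transparent analytically: via the uniformization $A\cong\bb C^g/(\bb Z^g+\tau\bb Z^g)$ the stack is locally modelled on the Siegel upper half-space $\mathfrak H_g=\{\tau\in M_g(\bb C):\tau^{t}=\tau,\ \op{Im}\tau>0\}$, an open subset of $\op{Sym}^2\bb C^g$ of the stated dimension. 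For irreducibility I would use that, analytically, $\ms A_{g,n}\cong[\mathfrak H_g/\Gamma_g(n)]$ where $\Gamma_g(n)=\ker(\op{Sp}_{2g}(\bb Z)\to\op{Sp}_{2g}(\bb Z/n\bb Z))$; since $\mathfrak H_g$ is connected (indeed contractible), its quotient is irreducible. Equivalently, $\ms A_{g,n}\to\ms A_g$ is a finite étale $\op{Sp}_{2g}(\bb Z/n\bb Z)$-cover over the irreducible $\ms A_g$, and the monodromy $\op{Sp}_{2g}(\bb Z)\twoheadrightarrow\op{Sp}_{2g}(\bb Z/n\bb Z)$ acts transitively on the set of symplectic level-$n$ structures, forcing connectedness.

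Finally, for $n\geq 3$ I would upgrade the statement from a stack to a quasi-projective variety. The key point is Serre's rigidity lemma: an automorphism of a p.p.a.v. inducing the identity on $A[n]$ with $n\geq 3$ is trivial, because a torsion element of $\op{Sp}_{2g}(\bb Z)$ congruent to the identity modulo $n$ must be the identity. Thus all automorphism groups are trivial, the inertia is trivial, and $\ms A_{g,n}$ is an algebraic space; equivalently $\Gamma_g(n)$ acts freely on $\mathfrak H_g$. Quasi-projectivity then follows either from the GIT construction of \cite[Ch. 7]{Mum94}, where the relevant Hilbert-scheme locus is shown to consist of stable points with trivial stabilizers so that the geometric quotient is a quasi-projective variety, or analytically from the Baily–Borel compactification of $\mathfrak H_g/\Gamma_g(n)$. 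I expect the main obstacle to be precisely this last step — producing an honest quasi-projective \emph{scheme} rather than merely a smooth algebraic space — which is where Mumford's stability analysis (or the Baily–Borel theory) does the real work; the remaining assertions are comparatively formal once the deformation-theoretic and rigidity inputs are in place.
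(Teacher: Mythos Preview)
Your proposal is correct and considerably more detailed than what the paper offers: the paper does not give a proof at all, merely noting that the result ``can be obtained adapting the arguments in \cite[ch.~7]{Mum94}''. Your outline is consistent with that reference --- the Hilbert-scheme/$\op{PGL}$ presentation and the GIT stability analysis for $n\geq 3$ are precisely the content of Mumford's Chapter~7 --- and you additionally supply the deformation-theoretic and analytic viewpoints (Serre--Tate, the Siegel upper half-space uniformization, Serre's rigidity lemma) that make the smoothness, dimension, and irreducibility claims transparent over $\bb C$. So there is no discrepancy to flag: your approach subsumes the paper's cited source and adds alternative, equally standard, routes to the same conclusions.
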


This stack comes equipped with the following objects:
\begin{itemize}
\item[-]  two stacks $\ms X_{g,n}$ and $\ms X_{g,n}^{\vee}$ together with representable proper and smooth morphisms of stacks $\pi:\ms X_{g,n}\to\ms A_{g,n}$ and $\pi^{\vee}:\ms X^{\vee}_{g,n}\to\ms A_{g,n}$ of relative dimension $g$;
\item[-] a closed substack $\ms X_{g,n}[n]\subset\ms X_{g,n}$, which is finite and \'etale over $\ms A_{g,n}$;
\item[-] an isomorphism of stacks $\Lambda:\ms X_{g,n}\to\ms X_{g,n}^{\vee}$ over $\ms A_{g,n}$;
\item[-] an isomorphism of stacks $\varPhi:\ms X_{g,n}[n]\cong (\bb Z/n\bb Z)^{2g}_{\ms A_{g,n}}$ over $\ms A_{g,n}$,
\end{itemize}
such that for any morphism $p:S\to\ms A_{g,n}$ associated to an object $(A\to S,\lambda, \varphi)$, we have two commutative polygons
$$
\xymatrix{
& A\ar[rr]\ar[ld]\ar@{-}[d]^(.70){\lambda}  &&\ms X_{g,n}\ar[ld]^\pi\ar[dd]^(0.4){\Lambda}\\
S\ar[rr]_(0.6){p} &\ar[d] & \ms A_{g,n}&\\
& A^{\vee}\ar[lu]\ar[rr] &&\ms X_{g,n}^{\vee}\ar[lu]^{\pi^{\vee}}
}\quad
\xymatrix{
& A[n]\ar[rr]\ar[ld]\ar@{-}[d]^(.70){\varphi}  &&\ms X_{g,n}[n]\ar[ld]^\pi\ar[dd]^(0.4){\varPhi}\\
S\ar[rr]_(0.6){p} &\ar[d] & \ms A_{g,n}&\\
& (\bb Z/n\bb Z)_{\ms A_{g,n}}^{2g}\ar[lu]\ar[rr] &&\ms (\bb Z/n\bb Z)_{\ms A_{g,n}}^{2g}\ar[lu]
}
$$
where the faces with four edges are cartesian diagrams. In other words $(\ms X_{g,n}\to\ms A_{g,n},\Lambda,\varPhi)$ is the universal triple of the moduli stack $\ms A_{g,n}$. We will call $\ms X_{g,n}\to\ms A_{g,n}$ the \emph{universal abelian variety over $A_{g,n}$} and with $O_{\ms X_{g,n}}$ we will denote its zero section. The isomorphism $\Lambda$ will be called \emph{universal polarization} and $\varPhi$ the \emph{universal level n-structure}. Observe that by definition the stack $\pi^{\vee}:\ms X^{\vee}_{g,n}\to\ms A_{g,n}$ parametrises the line bundles on the universal abelian variety which are algebraically trivial on each geometric fiber. We will call it \emph{universal dual abelian variety}.

\section{The Mordell-Weil group of $\ms X_{g,n}$.}\label{0-deg}
A first step to prove the Theorem \ref{picxg} is to understand the sections of the universal dual abelian variety $\ms X^{\vee}_{g,n}\to\ms A_{g,n}$ restricted to all open substacks $\ms{U} \subseteq \ms{A}_{g,n}$. Using the universal polarization $\Lambda$, this amounts to understanding the group of the rational sections of the universal abelian variety, which is usually called \emph{Mordell-Weil group}. We want to prove the following:

\begin{Teo}\label{strf0}
Assume $g\geq 1$. For all open substacks $\ms{U} \subseteq \ms{A}_{g,n}$ the group of sections $\ms{U}\rightarrow \ms{X}_{g,n}\times_{\ms{A}_{g,n}}\ms{U}$ is isomorphic to $(\mathbb{Z}/n\mathbb{Z})^{2g}$, the isomorphism being given by restricting the canonical isomorphism  $\varPhi:\ms X_{g,n}[n]\cong (\bb Z/n\bb Z)^{2g}_{\ms A_{g,n}}$.
\end{Teo}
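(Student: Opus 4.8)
first establish the result when $n \geq 3$ (where $\ms A_{g,n}$ is a genuine variety and we can invoke the Shioda–Silverberg computation directly), then bootstrap to the remaining cases $n = 1, 2$ by a level-covering argument. Throughout, the key structural input is that a section of $\pi: \ms X_{g,n} \to \ms A_{g,n}$ over an open substack $\ms U$ is exactly a rational section of the universal abelian variety, i.e.\ an element of the Mordell–Weil group of the generic fiber, and that the inclusion $\ms X_{g,n}[n] \hookrightarrow \ms X_{g,n}$ already supplies $(\bb Z/n\bb Z)^{2g}$ worth of \emph{everywhere-defined} sections via $\varPhi^{-1}$. So the content is entirely that there are \emph{no other} rational sections.

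For $n \geq 3$, I would argue as follows. Since $\ms A_{g,n}$ is then a smooth quasi-projective variety with universal family $\ms X_{g,n} \to \ms A_{g,n}$, a section over a dense open $\ms U$ is a rational section of this abelian scheme, hence an element of the Mordell–Weil group $\ms X_{g,n}(\bb C(\ms A_{g,n}))$ of the generic fiber. Shioda (for $g=1$) and Silverberg (for $g \geq 1$, $n \geq 3$) computed precisely this group and showed it equals the group of $n$-torsion sections, which is $(\bb Z/n\bb Z)^{2g}$. The only point to verify carefully is that every rational section is in fact everywhere defined — this follows because the target is proper over a smooth (in particular normal) base, so by the valuative criterion a rational section of a proper scheme over a normal base extends over all codimension-one points, and since the $n$-torsion subscheme $\ms X_{g,n}[n]$ is finite étale over $\ms A_{g,n}$, these extended sections are exactly the torsion ones. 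I would then identify this group with $(\bb Z/n\bb Z)^{2g}$ via $\varPhi$, noting that $\varPhi$ is by construction compatible with the group structure.

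The harder and more interesting step is descending to $n = 1, 2$, where $\ms A_{g,n}$ is only a Deligne–Mumford stack with stacky generic point, so there is no honest generic fiber over a field and the classical Mordell–Weil statement does not apply verbatim. Here the plan is to use the forgetful map $f: \ms A_{g,m} \to \ms A_{g,n}$ for a suitable multiple $m = kn$ with $m \geq 3$, which is finite étale, together with the induced cartesian square relating $\ms X_{g,m}$ and $\ms X_{g,n}$. Given a section $\sigma$ of $\ms X_{g,n}$ over an open $\ms U \subseteq \ms A_{g,n}$, I would pull it back to a section $\tilde\sigma$ of $\ms X_{g,m}$ over the preimage $f^{-1}(\ms U)$; by the already-established case this $\tilde\sigma$ is an $m$-torsion section, and in particular $n \cdot \tilde\sigma$ is the zero section. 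Then I would use that the zero section and torsion sections descend, together with the fact that multiplication-by-$n$ and the group structure are preserved, to conclude that $\sigma$ itself is an $n$-torsion section lying in the image of $\varPhi^{-1}$.

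\textbf{The main obstacle I anticipate} is making the descent argument for $n = 1, 2$ precise at the level of stacks: one must check that a section over $\ms U$ really does pull back to a \emph{genuine} section over $f^{-1}(\ms U)$ compatibly with the group structures, and conversely that an $m$-torsion section which is $n$-equivariant (i.e.\ invariant under the deck group of $f$, or more accurately that descends) lands in $\ms X_{g,n}[n]$ rather than merely in $\ms X_{g,n}[m]$. The subtle point is that the automorphisms of objects of $\ms A_{g,n}$ (present because the generic point is stacky for $n \le 2$, e.g.\ the $\pm 1$ automorphism) act on sections, and one must verify that a rational section, being a map of stacks, is automatically compatible with these automorphisms; this compatibility is exactly what forces the section to be torsion and, combined with the Weil-pairing constraints built into the level structure, pins it down to the $(\bb Z/n\bb Z)^{2g}$ coming from $\varPhi$. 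I expect the cleanest route is to phrase everything in terms of the section being $\pm 1$-invariant (since $[-1]$ is always an automorphism of a p.p.a.v.), which immediately shows $2\sigma = 0$ generically and thus handles the even case, with the étale-covering argument supplying the general statement.
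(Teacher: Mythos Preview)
Your outline agrees with the paper for $n \geq 3$ (both simply invoke Shioda--Silverberg), but for $n \in \{1,2\}$ your route and the paper's diverge, and your proposal has a gap at exactly the point you flag as the main obstacle.

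The assertion ``$\tilde\sigma$ is an $m$-torsion section, and in particular $n \cdot \tilde\sigma$ is the zero section'' is unjustified: Shioda--Silverberg at level $m$ only gives $m\tilde\sigma=0$, not $n\tilde\sigma=0$. What descent buys you is that $\sigma$ is a section of the finite \'etale group scheme $\ms X_{g,n}[m]\to\ms U$; to conclude it lies in $\ms X_{g,n}[n]$ you must show that the monodromy of $\ms X_{g,n}[m]$ over $\ms A_{g,n}$ has no fixed points outside the $n$-torsion --- equivalently, that the invariants of $\ker\bigl(\op{Sp}_{2g}(\bb Z/m\bb Z)\to\op{Sp}_{2g}(\bb Z/n\bb Z)\bigr)$ on $(\bb Z/m\bb Z)^{2g}$ are exactly the image of $(\bb Z/n\bb Z)^{2g}$. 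This is true and not deep, but it is the entire content of the step and you have not supplied it. Your $\pm 1$-invariance observation is strictly weaker: it only yields $2\sigma=0$, which does settle $n=2$ (since $\ms X_{g,2}[2]$ is already trivialized by $\varPhi$), but for $n=1$ you still need the full monodromy computation to exclude nonzero $2$-torsion sections of $\ms X_g[2]\to\ms A_g$.

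The paper sidesteps all of this with a different mechanism. It passes to an \emph{equivariant approximation} $B_{g,n}\to\ms A_{g,n}$: an honest scheme sitting as a high-codimension open in a vector bundle over the stack, so that sections and Picard groups agree with those of $\ms A_{g,n}$ but there is now a genuine generic point $\xi$. The argument then splits into two independent halves: (i) the torsion of the Mordell--Weil group at $\xi$ is bounded above by specialization (through a chain of regular local rings) to a closed point whose fiber is a power of an elliptic curve with Mordell--Weil group exactly $(\bb Z/n\bb Z)^{2g}$; (ii) the Mordell--Weil group at $\xi$ is shown to be entirely torsion by pulling back to $B_{g,3n}$, where Shioda--Silverberg applies once one checks that a purely transcendental base extension does not change the Mordell--Weil group. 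Step~(i) is what replaces your missing monodromy computation, and it is genuinely different in character: specialization to a well-chosen fiber rather than analysis of deck-group invariants. Your approach can certainly be completed along the lines you sketch and would then be somewhat more elementary, but as written the decisive step is asserted rather than proved.
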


In this section we will assume implicitly $g\geq 1$. For $n\geq 3$ the Theorem was proven by Shioda in the elliptic case and then by Silverberg in higher dimension (see Theorem \ref{Silverberg} below). Starting from this, we will extend the Shioda-Silverberg's results to the remaining cases. The main problem here for $n=\lbrace 1,2 \rbrace$ is that differently from the case of $\ms{A}_{g,m} m\geq 3$, the stack $\ms{A}_{g,n}$ is not generically a scheme, so we cannot really reduce the argument to considerations on the fiber of the generic point, or more precisely, there is not generic point at all. To solve this we introduce the technique of equivariant approximation:

\begin{Prop-Def}\label{Equi}
Let $G$ be an affine smooth group scheme, and let $\ms{M}=\left[ X/G \right]$ be a quotient stack. Choose a representation $V$ of $G$ such that $G$ acts freely on an open subset $U$ of $V$ whose complement has codimension $2$ or more. The quotient $\left[ X\times U/G \right]$ will be called an \emph{equivariant approximation of $\ms M$}. It has the following properties:
\begin{enumerate}
\item $\left[ X\times U/G \right]$ is an algebraic space. If $X$ is quasiprojective and the action of $G$ is linearized then $\left[ X\times U/G \right]$ is a scheme.
\item $\left[X\times V/G \right]$ is a vector bundle over $\left[X/G\right]$, and $\left[ X\times U /G \right]\hookrightarrow \left[X\times V/G \right]$ is an open immersion, whose complement has codimension $2$ or more.
\item The map $\left[ X\times U/G\right] \rightarrow \left[X/G \right]$ is smooth, surjective, separated and if $K$ is an infinite field then every map $\op{Spec}(K) \xrightarrow{p} \left[ X/G \right]$ lifts to a map $\op{Spec}(K)\rightarrow [X\times U]/G$.
\item If $X$ is locally factorial, then the map $\left[X\times U/G\right]\to \left[X/G\right]$ induces an isomorphism at level of Picard groups.
\end{enumerate}

Moreover, such a representation $V$ always exists for an affine smooth groups scheme $G$ over a field $k$.
\end{Prop-Def}
\begin{proof}
This is presented in \cite{Equiv}, where all points above are proven. The only point that needs further commenting is point $3$. Let $P: \op{Spec}(K) \rightarrow \left[ X/G \right]$, and consider the fiber $\left[ X \times U / G \right] \times_{\left[ X/G \right]} \op{Spec}(K)$. It is an open subset of a vector bundle over $\op{Spec}(K)$, so if $K$ is infinite we know that its rational points are dense, and we have infinitely many liftings of $P$.

 Note if $K$ is a finite field it is possible for the fiber $\left[ X \times U /G \right]\times_{\left[ X/G \right]} \op{Spec}(K)$ to have no rational point at all, as the rational points in a vector bundle over a finite field form a closed subset. This shows that if $K$ is finite the point may not have any lifting.
 
The last statement is a direct consequence of the well-known fact that an affine smooth algebraic group over a field $k$ always admits a faithful finite dimensional representation, so we just need to prove it for $G=GL_n$. Such a representation can be constructed for example as in \cite[3.1]{Equiv}.
\end{proof}

Note that there exists $r$ such that $\ms{A}_{g,n}$ is the quotient a quasiprojective scheme by a linearized group action of the affine group scheme $PGL_r$ \cite[ch. 7]{Mum94}. Then we can take an equivariant approximation $B_{g,n} \xrightarrow{\pi} \ms{A}_{g,n}$ where $B_{g,n}$ is a scheme, and by pulling back $\ms X_{g,n}$ we get an induced family $X_{g,n} \rightarrow B_{g,n}$.

\begin{Prop}
If $\ms{X}_{g,n} \rightarrow \ms{A}_{g,n}$ has two non isomorphic sections over some open subset $\ms U \subseteq \ms{A}_{g,n}$, then $X_{g,n} \rightarrow B_{g,n}$ has two non isomorphic (i.e. distinct, as $X_{g,n}, B_{g,n}$ are schemes) sections over the open subset $U := \ms U \times_{\ms A_{g,n}} B_{g,n}$.
\end{Prop}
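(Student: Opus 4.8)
The plan is to prove the contrapositive: assuming that the two pulled-back sections $\tilde s_1,\tilde s_2\colon U\to X_{g,n}\times_{B_{g,n}}U$ coincide as morphisms of schemes, I will show that the original sections $s_1,s_2$ are isomorphic over $\ms U$. First I would record the identification $X_{g,n}\times_{B_{g,n}}U=\ms X_{g,n}\times_{\ms A_{g,n}}U$, so that $\tilde s_1,\tilde s_2$ are literally the pullbacks of $s_1,s_2$ along the structure map $q\colon U=\ms U\times_{\ms A_{g,n}}B_{g,n}\to\ms U$, which by Definition-Proposition \ref{Equi}(3), restricted over $\ms U$, is smooth and surjective, hence faithfully flat. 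Since $X_{g,n}$ and $B_{g,n}$ are schemes, two sections over $U$ are isomorphic precisely when they are equal, so the hypothesis to be contradicted is exactly $\tilde s_1=\tilde s_2$.

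The key object is the functor $\underline{\op{Isom}}_{\ms U}(s_1,s_2)$ of isomorphisms between the two sections, realized as the $2$-fibre product $\ms U\times_{(s_1,s_2),\,\ms Y\times_{\ms U}\ms Y,\,\Delta}\ms Y$, where $\ms Y:=\ms X_{g,n}\times_{\ms A_{g,n}}\ms U$ and $\Delta$ is its relative diagonal. Because $\pi\colon\ms X_{g,n}\to\ms A_{g,n}$ is representable and proper, hence separated, $\Delta$ is a closed immersion, so $\underline{\op{Isom}}_{\ms U}(s_1,s_2)\hookrightarrow\ms U$ is itself a closed immersion; denote its source by $Z$. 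By construction $s_1\cong s_2$ over $\ms U$ if and only if $Z=\ms U$ (a closed immersion admitting a section over the base is an isomorphism). Since $2$-fibre products commute with base change, pulling back along $q$ yields $Z\times_{\ms U}U\cong\underline{\op{Isom}}_U(\tilde s_1,\tilde s_2)$, and over the scheme $X_{g,n}\times_{B_{g,n}}U$ this Isom-scheme is nothing but the equalizer of $\tilde s_1$ and $\tilde s_2$.

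Now the assumption $\tilde s_1=\tilde s_2$ says precisely that this equalizer is all of $U$, i.e. $Z\times_{\ms U}U=U$, so the closed immersion $Z\hookrightarrow\ms U$ becomes an isomorphism after the faithfully flat base change $q$. Faithfully flat descent for the property of being an isomorphism (equivalently: the defining ideal of $Z$ pulls back to zero, hence is zero) then forces $Z=\ms U$, i.e. $s_1\cong s_2$, contradicting the hypothesis. I expect the only genuinely delicate point to be the bookkeeping between the $2$-categorical notion of \emph{isomorphic} sections on the stack side and plain \emph{equality} of morphisms on the scheme side; this is exactly what the $\underline{\op{Isom}}$-functor mediates, and once its representability by a closed immersion and its compatibility with base change are in place, the descent step is routine. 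One may also phrase the same argument via the difference section: using that the zero section $O$ of $\ms Y$ is a closed immersion, replace $Z$ by $(s_1-s_2)^{-1}(O_{\ms Y})$, which gives an equivalent and perhaps more transparent formulation.
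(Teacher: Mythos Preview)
Your proof is correct and is essentially the paper's argument spelled out in full: both rest on the fact that the smooth surjective map $q\colon U\to\ms U$ lets one descend an isomorphism of sections, and the paper compresses this into the single assertion that $\pi$ (hence $q$) ``is an epimorphism, being a smooth covering.'' Your construction of $\underline{\op{Isom}}_{\ms U}(s_1,s_2)$ as a closed substack via separatedness of $\ms X_{g,n}\to\ms A_{g,n}$, followed by faithfully flat descent for isomorphisms, is precisely the standard justification of that epimorphism claim in the $2$-categorical setting, so the two proofs differ only in level of detail.
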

\begin{proof}
Let $\ms{U}, U$ be as above. Let $\sigma_1, \sigma_2$ be two non isomorphic sections of $\ms{X}_{g,n} \rightarrow \ms{A}_{g,n}$. By the universal property of fibered product we get induced sections $(\op{Id},\sigma_1\circ \pi),(\op{Id}, \sigma_2\circ \pi):B_{g,n} \rightarrow X_{g,n}$. We get an obvious commutative diagram, and we can conclude that the two maps must be different since $\pi$ is an epimorphism, being a smooth covering.
\end{proof}

This way we have reduced our problem to showing that there are exactly $n^{2g}$ sections of $X_{g,n} \rightarrow B_{g,n}$. We now proceed to prove some lemmas.

\begin{Lem}
Let $S$ be the spectrum of a DVR $R$, and let $A \rightarrow S$ be an abelian scheme. Let $p,P$ be respectively the closed and generic point of $S$. Then for all $m$ the order of the $m$-torsion in the Mordell-Weil group of the closed fiber $A_p$ is greater or equal than the order of the $m$-torsion in the Mordell Weil group of the generic fiber $A_P$. 
\end{Lem}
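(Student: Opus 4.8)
The plan is to reinterpret both Mordell--Weil groups as groups of rational points of the fibres and to run the classical specialization-of-torsion argument, the engine of which is the étaleness of the torsion subscheme. Write $k(P)$ and $k(p)$ for the fraction field and the residue field of $R$. Then the Mordell--Weil group of $A_P$ is $A_P(k(P))$ and that of $A_p$ is $A_p(k(p))$, so it suffices to produce, for every $m$, an injection $A_P(k(P))[m]\hookrightarrow A_p(k(p))[m]$.

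The central object is the $m$-torsion subscheme $A[m]=\ker([m]\colon A\to A)$. Since we work over $\mathbb{C}$ the residue characteristic is $0$, hence $m$ is invertible in $R$ and the multiplication-by-$m$ isogeny is étale; therefore $A[m]\to S$ is finite étale (of degree $m^{2g}$), and in particular proper, separated and unramified over $S$. I would first identify the generic torsion with global sections: because $A[m]\to S$ is finite (hence proper) and separated, the valuative criterion of properness shows that every $k(P)$-point of the generic fibre extends uniquely to an $S$-section, so restriction to the generic point is a bijection $A[m](S)\xrightarrow{\sim} A_P[m](k(P))=A_P(k(P))[m]$.

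It then remains to analyse the specialization map given by evaluation at the closed point, $A[m](S)\to A_p[m](k(p))=A_p(k(p))[m]$, and the key step is its injectivity. This follows from the fact that $A[m]\to S$ is unramified and separated: the equalizer of two sections agreeing at $p$ is both open (the diagonal of an unramified morphism is an open immersion) and closed (the diagonal of a separated morphism is a closed immersion), so it is a clopen subset of the connected local scheme $S$; since it contains the closed point $p$, it is all of $S$, and the two sections coincide. Composing the bijection with this injection gives $\lvert A_P(k(P))[m]\rvert = \lvert A[m](S)\rvert \le \lvert A_p(k(p))[m]\rvert$, which is exactly the assertion that the $m$-torsion of the closed fibre has order at least that of the generic fibre.

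I expect the main obstacle to be precisely the injectivity of specialization on $m$-torsion, which rests entirely on the étaleness, equivalently the unramifiedness, of $A[m]\to S$; this is why one needs $m$ invertible in $R$. If the residue characteristic divided $m$, the torsion subscheme would acquire an infinitesimal part and the clopen-equalizer argument would fail, so in general one only gets injectivity on the prime-to-residue-characteristic part. In the present setting over $\mathbb{C}$ the residue characteristic is $0$, so no such restriction is needed and the inequality holds for all $m$.
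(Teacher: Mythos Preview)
Your argument is correct and complete. Both the paper and you isolate the finite \'etale scheme $A[m]\to S$ as the key object, but the routes diverge from there. The paper first replaces $R$ by its Henselization and then invokes the structure theorem for finite \'etale algebras over a Henselian local ring: $A[m]$ decomposes as a disjoint union of spectra of local rings $R_i'$, and a $k(P)$-point of the generic fibre forces the corresponding $R_i'$ to be \'etale of degree one over $R$, hence isomorphic to $R$, whence a $k(p)$-point of the closed fibre. Your approach bypasses Henselization entirely: you use the valuative criterion to identify $A[m](S)$ with $A_P[m](k(P))$, and then the clopen-equalizer argument (open diagonal from unramifiedness, closed diagonal from separatedness, connectedness of $S$) to show that specialization to the closed fibre is injective. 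Your route is a bit more direct and stays over the original $R$; the paper's route is more structural in that it makes the decomposition of $A[m]$ explicit. Both rest on exactly the same hypothesis, namely that $m$ is invertible on $S$ so that $A[m]\to S$ is \'etale, and your closing remark about what fails in positive residue characteristic is apt.
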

\begin{proof}
Let $A\left[ m\right] \rightarrow S$ be the closed subscheme of $n$-torsion points of $A$. Then $A\left[ m\right] \xrightarrow{\pi} S$ is a proper \'etale morphism, as we are working in characteristic zero. We may suppose that $R$ is Henselian. Being \'etale and proper, the map $\pi$ is finite. A finite extension of a local Henselian ring is a product of local Henselian rings \cite[Tag 04GH]{StPr}. Then for every lifting of $P$ to $A\left[ m \right]$ we have a corresponding map of local rings $R' \rightarrow R$ that is \'etale of degree one, i.e. it is an isomorphism. This implies that there is a corresponding lifting of $p$ to $A\left[ m\right]$, proving the lemma.
\end{proof}

\begin{Lem}\label{local}
Let $R$ a Noetherian local regular ring, and let $p,P$ be the closed and generic and closed points of $\op{Spec}(R)$. Let $A \rightarrow \op{Spec}(R)$ be an abelian scheme. Then for all $m > 0$ the order of the $m$-torsion in the Mordell-Weil group of $A_p$ is greater or equal than the order of the $m$-torsion in the Mordell Weil group of $A_P$. 
\end{Lem}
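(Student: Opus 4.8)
The plan is to study the finite étale $m$-torsion group scheme $A[m]\to\op{Spec}(R)$ directly, rather than reducing to the one-dimensional case by an induction on $\dim R$. Since we work in characteristic zero, multiplication by $m$ on the abelian scheme $A$ is a finite étale isogeny, so $A[m]=\op{Spec}(B)$ for a finite étale $R$-algebra $B$ of rank $m^{2g}$. Because $R$ is regular local it is in particular a normal integral domain, and I would exploit this to compare the generic fibre $A_P[m](K)$ (with $K=\op{Frac}(R)$), the integral sections $A[m](R)$, and the special fibre $A_p[m](k)$ (with $k=R/\mathfrak m$). The two facts to establish are: (i) every $m$-torsion point of $A_P$ rational over $K$ extends to a section of $A[m]$ over $\op{Spec}(R)$, so the restriction $A[m](R)\to A_P[m](K)$ is surjective; and (ii) restriction to the closed point $A[m](R)\to A_p[m](k)$ is injective. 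Granting these one obtains $\#A_P[m](K)\le\#A[m](R)\le\#A_p[m](k)$, which is the asserted inequality.

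For (i) I would decompose $B=\prod_i B_i$ into its connected components. Étale over a normal ring is normal, and a connected normal ring is a domain, so each $B_i$ is a normal domain, finite étale over $R$; being a nonzero finite flat module over the local ring $R$ it is free of positive rank, hence faithfully flat, so $R\hookrightarrow B_i$ and $B_i\otimes_R K=\op{Frac}(B_i)=:L_i$ is a finite separable extension of $K$. A $K$-point of $A[m]$ is a $K$-algebra homomorphism $\prod_i L_i\to K$; since the source is a product of fields it must factor through the projection to a single $L_i$ followed by an embedding $L_i\hookrightarrow K$, forcing $L_i=K$, i.e. $\op{rank}_R B_i=1$, i.e. $B_i=R$ (a degree-one finite étale algebra is trivial). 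This factor $B_i\cong R$ is precisely an $R$-section restricting to the given $K$-point, which proves surjectivity (in fact bijectivity).

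For (ii) I would use that the $R$-sections correspond exactly to those factors $B_i$ isomorphic to $R$, i.e. to clopen components of $A[m]$ isomorphic to $\op{Spec}(R)$. Distinct sections come from distinct, hence disjoint, components; passing to the fibre over the closed point, their images lie in distinct factors of $B\otimes_R k=\prod_i(B_i\otimes_R k)$ and are therefore distinct $k$-points, yielding the injectivity of $A[m](R)\to A_p[m](k)$.

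The main obstacle is step (i): the extension of rational $m$-torsion points across all of $\op{Spec}(R)$. This is the exact higher-dimensional analogue of the valuative-criterion argument used in the DVR lemma, and it genuinely rests on $R$ being normal, so that connected finite étale covers are integrally closed domains whose generic rank-one pieces are already trivial. I expect no real difficulty beyond this; the characteristic-zero hypothesis is used only to guarantee that $A[m]$ is étale, and everything else is formal from the product decomposition of finite étale algebras over a normal local domain.
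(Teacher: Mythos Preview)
Your argument is correct and takes a genuinely different route from the paper. The paper proceeds by induction on $\dim R$: the DVR case is the preceding lemma, and for the inductive step one picks a regular parameter $a_1\in\mathfrak m$ and interpolates between $P$ and $p$ through the height-one prime $(a_1)$, chaining the DVR inequality for $R_{(a_1)}$ with the inductive hypothesis applied to $R/(a_1)$. You instead work directly with the finite \'etale $R$-algebra $B$ (where $A[m]=\op{Spec}B$), decompose it into connected components over the normal local base, and read off that $A[m](R)\to A_P[m](K)$ is bijective while $A[m](R)\to A_p[m](k)$ is injective. This is more direct: it eliminates the induction, subsumes the DVR lemma rather than invoking it, and in fact uses only that $R$ is a normal Noetherian local domain---regularity per se is never needed. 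The paper's approach, by contrast, is a natural ``specialize along a chain of primes'' argument that stays close to the Henselization picture of the preceding lemma but genuinely relies on regular sequences.
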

\begin{proof}
We prove the lemma by induction on the dimension of $\op{Spec}(R)$. The case $\op{dim}(\op{Spec}(R))=1$ is the previous lemma. Now suppose $\op{dim}(S) \geq 2$ and take a regular sequence $(a_1,\ldots,a_r)$ for $R$. Then $R_1=R_{(a_1)}$ and $R_2:=R/(a_1)$ are both Noetherian local regular rings. If we see $\op{Spec}(R_1),\op{Spec}(R_2)$ as subschemes of $\op{Spec}(R)$ the generic point of $\op{Spec}(R_1)$ is $P$, the generic point of $\op{Spec}(R_2)$ is the closed point of $\op{Spec}(R_1)$, and the closed point of $\op{Spec}(R_2)$ is $p$.

 Denote respectively by $P_1,P_2$ the generic points of $\op{Spec}(R_1),\op{Spec}(R_2)$, and by $p_1,p_2$ their closed points. Denote respectively by $A_1, A_2$ the pullbacks of $A$ to $\op{Spec}(R_1), \op{Spec}(R_2)$. Then
$$ \sharp (A\left[ m\right](P))=\sharp(A_1\left[ m\right](P_1))\leq \sharp(A_1\left[ m\right](p_1))= \sharp(A_2\left[ m\right](P_2)) \leq \sharp(A_2\left[ m\right](p_2))=\sharp(A\left[ m\right](p)) $$
where the first equality comes from previous lemma, and the last comes from the inductive hypothesis.
\end{proof}

\begin{Lem}
For all $n>0, g>0$ there is a field $K$, finitely generated over $\bb C$, and a p.p.a.v. of dimension $g$ with level $n$-structure over $K$ that has Mordell-Weil group isomorphic to $(\mathbb{Z}/n\mathbb{Z})^{2g}$, where the isomorphism comes from the level structure.
\end{Lem}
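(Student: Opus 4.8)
The plan is to produce the required p.p.a.v.\ by reducing to the one-dimensional case and then taking products. For $n\geq 3$ there is essentially nothing to do: the stack $\ms A_{g,n}$ is a smooth quasi-projective variety, so its generic point has residue field $K$ finitely generated over $\bb C$, and by the Shioda--Silverberg computation (Theorem \ref{Silverberg}) the Mordell--Weil group of the generic fiber of $\ms X_{g,n}\to\ms A_{g,n}$ is exactly $(\bb Z/n\bb Z)^{2g}$, the identification being induced by the universal level structure $\varPhi$. So the real content is the construction of anchors for $n=1$ and $n=2$, where I would build the abelian variety as a product of elliptic curves.

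First I would settle $g=1$. For $n=2$ take the Legendre family $E:\ y^2=x(x-1)(x-\lambda)$ over $K=\bb C(\lambda)$: its nonzero $2$-torsion points $(0,0),(1,0),(\lambda,0)$ are $K$-rational and give a level-$2$ structure, and the associated (extremal) rational elliptic surface has singular fibers of type $I_2,I_2,I_2^\ast$ over $\lambda=0,1,\infty$. For $n=1$ take $E:\ y^2=x^3+t$ over $K=\bb C(t)$, whose minimal elliptic surface is rational with fibers of type $II$ and $II^\ast$. In both cases the Shioda--Tate formula gives Mordell--Weil rank $0$, while the torsion subgroup embeds into the product of the groups of components of the singular fibers: for $y^2=x^3+t$ every component group is trivial, so $E(K)=0=(\bb Z/1\bb Z)^2$, and for the Legendre curve the computation leaves precisely the full $2$-torsion, so $E(K)=(\bb Z/2\bb Z)^2$. (For $g=1$ and $n\geq 3$ one could likewise take Shioda's universal elliptic curve over the modular curve $Y(n)$.)

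To pass to arbitrary $g$ I would spread the building block over independent transcendentals. Writing $E$ for the chosen elliptic curve over $\bb C(t)$, set $K=\bb C(t_1,\dots,t_g)$, let $E_i$ be the base change to $K$ of the copy of $E$ in the variable $t_i$, and put $A=E_1\times_K\cdots\times_K E_g$. Since $K$ is purely transcendental over $\bb C(t_i)$ in the remaining variables, and a rational map from a rational variety to an abelian variety is constant, base change does not enlarge rational points, whence $E_i(K)=E(\bb C(t_i))=(\bb Z/n\bb Z)^2$ and therefore $A(K)=\prod_i E_i(K)=(\bb Z/n\bb Z)^{2g}$. The product of the principal polarizations is again principal, and the orthogonal direct sum of the $g$ rank-$2$ symplectic level structures becomes, after a symplectic change of basis (an element of $\op{Sp}_{2g}(\bb Z/n\bb Z)$), a symplectic level-$n$ structure of the standard type; since $A(K)=A[n]$, the induced isomorphism $A(K)\cong(\bb Z/n\bb Z)^{2g}$ is exactly the one coming from this level structure, as required.

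The main obstacle is the one-dimensional input for $n\leq 2$, namely exhibiting elliptic curves over a function field whose Mordell--Weil group is as small as the level structure forces it to be. This is where the theory of (extremal) rational elliptic surfaces does the work: the rank vanishes by Shioda--Tate and the torsion is pinned down by the configuration of singular fibers. The remaining steps---the product construction and the invariance of rational points under purely transcendental base change---are routine. Note that the semicontinuity lemmas proved above are not used in this construction itself, but only afterwards, to propagate this single fiber computation to the generic point of the equivariant approximation $B_{g,n}$.
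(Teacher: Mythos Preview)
Your approach is correct and coincides with the paper's: settle $g=1$ first and then pass to products. The paper is simply more terse—it cites Schr\"oer for $n=1$ and Shioda for all $n\geq 2$ in the elliptic case, and then takes the $g$-fold self-product $E^g$ over the \emph{same} base field (so $E^g(K)=E(K)^g=(\bb Z/n\bb Z)^{2g}$ at once), which spares both your spread over independent transcendentals and the appeal to Silverberg's theorem for $n\geq 3$.
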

\begin{proof}
For $g=1$ it the result is proven in \cite[Prop. 3.2]{Schroer} when $N=1$ and in \cite[Thm 5.5 + Rmk 5.6]{Shioda1} for the other cases (see also \cite{shioda2}). We can then take powers of these elliptic curves to obtain the general statement.
\end{proof}

Recall now that $\ms X_{g,n} \rightarrow \ms A_{g,n}$ is a smooth morphism with connected fibers, and $\ms A_{g,n}$ is smooth and irreducible, so it the same goes for $\ms X_{g,n}$. Consequently also $B_{g,n}$ and $X_{g,n}$ are smooth and irreducible, being open subsets of vector bundles over $\ms A_{g,n}$ and $\ms X_{g,n}$ respectively.

\begin{Prop}
Let $\xi$ be the generic point of $B_{g,n}$. Then the torsion of the Mordell-Weil group of $X_{g,n} \times_{B_{g,n}} \xi$ is isomorphic to $(\mathbb{Z}/n\mathbb{Z})^{2g}$, the morphism coming from the level structure.
\end{Prop}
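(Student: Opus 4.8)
The plan is to combine the already-established lemmas about the behaviour of $m$-torsion in the Mordell-Weil group under specialization with the existence of a single good point, using the fact that the generic point of $B_{g,n}$ specializes to every closed point. First I would recall the setup: $B_{g,n}$ is smooth and irreducible, with generic point $\xi$, and $X_{g,n}\to B_{g,n}$ is the pulled-back universal family. Since $X_{g,n}\to B_{g,n}$ has at least the $n$-torsion sections (coming from $\varPhi$ restricted to the closed substack $\ms X_{g,n}[n]$, which pulls back to an étale subgroup scheme of order $n^{2g}$), the $n$-torsion of the Mordell-Weil group of the generic fiber contains a copy of $(\bb Z/n\bb Z)^{2g}$. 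Thus it suffices to bound this torsion from above by $n^{2g}$.

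The key step is to produce a closed point $b\in B_{g,n}$ whose fiber is a p.p.a.v.\ with level-$n$ structure whose full Mordell-Weil group is exactly $(\bb Z/n\bb Z)^{2g}$. The preceding lemma guarantees such a p.p.a.v.\ over a field $K$ finitely generated over $\bb C$, and Proposition-Definition \ref{Equi}(3) lets me lift the corresponding map $\op{Spec}(K)\to\ms A_{g,n}$ to a map $\op{Spec}(K)\to B_{g,n}$, whose image is a point $b$ of $B_{g,n}$ with the desired fiber (here one uses that $K$, being finitely generated over the infinite field $\bb C$, is infinite). Next I would invoke Lemma \ref{local}: because $B_{g,n}$ is regular and irreducible, I may pass to the local ring $R=\oo_{B_{g,n},b}$, whose spectrum has generic point $\xi$ and a closed point over $b$, so that for every $m$ the order of the $m$-torsion of the Mordell-Weil group of the closed fibre dominates that of the generic fibre. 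Applying this with $m=n$ gives
$$
\sharp\bigl((X_{g,n})_\xi[n]^{\mathrm{MW}}\bigr)\leq \sharp\bigl((X_{g,n})_b[n]^{\mathrm{MW}}\bigr)=n^{2g}.
$$

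Combining the two inequalities, the $n$-torsion of the Mordell-Weil group of $X_{g,n}\times_{B_{g,n}}\xi$ is exactly $(\bb Z/n\bb Z)^{2g}$, generated by the restrictions of the universal $n$-torsion sections, which is precisely the claim about the torsion subgroup. The only subtlety I anticipate is verifying that the point $b$ produced by the lifting in \ref{Equi}(3) genuinely has the prescribed fibre and a well-behaved local ring to which Lemma \ref{local} applies: one must check that the chosen map $\op{Spec}(K)\to B_{g,n}$ factors through a point whose residue field carries the abelian variety with the prescribed Mordell-Weil group, and that the local ring of $B_{g,n}$ at that point is the regular local ring required by the lemma. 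Since $B_{g,n}$ is a smooth scheme and the level structure is rigid, this matching of data should be routine rather than genuinely difficult, so the main conceptual work is really the interplay between the upper semicontinuity of torsion (Lemma \ref{local}) and the existence of the extremal fibre.
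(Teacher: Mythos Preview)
Your approach is essentially the paper's own: produce a point of $B_{g,n}$ with special fibre having Mordell--Weil group exactly $(\bb Z/n\bb Z)^{2g}$ via the preceding lemma and the lifting property of Proposition--Definition~\ref{Equi}(3), then apply Lemma~\ref{local} to the regular local ring at that point to bound the torsion of the generic fibre. The subtlety you flag about matching the lifted point's fibre to the prescribed abelian variety is harmless: if $b$ is the image point with residue field $\kappa(b)\subseteq K$, the Mordell--Weil group over $\kappa(b)$ injects into that over $K$ and contains the $(\bb Z/n\bb Z)^{2g}$ coming from the level structure on $B_{g,n}$, so it is again exactly $(\bb Z/n\bb Z)^{2g}$.

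There is, however, a genuine gap. You apply Lemma~\ref{local} only for $m=n$, and conclude that the \emph{$n$-torsion} of the generic Mordell--Weil group is $(\bb Z/n\bb Z)^{2g}$; you then assert this ``is precisely the claim about the torsion subgroup''. But the proposition concerns the \emph{entire} torsion subgroup, and your argument as written does not exclude, say, a rational point of order $2n$ on the generic fibre. The fix is immediate: Lemma~\ref{local} holds for \emph{every} $m$, and since the full Mordell--Weil group at the special point $b$ is $(\bb Z/n\bb Z)^{2g}$, its $m$-torsion has order $\gcd(m,n)^{2g}\le n^{2g}$ for all $m$. Hence every $m$-torsion subgroup of the generic Mordell--Weil group has at most $n^{2g}$ elements, forcing the whole torsion subgroup to be finite of order at most $n^{2g}$ and therefore equal to the $(\bb Z/n\bb Z)^{2g}$ you already exhibited. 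Add this one line and the proof is complete.
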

\begin{proof}
Consider any point $P$ in $B_{g,n}$ such that $X_{g,n} \times_{B_{g,n}} P$ Mordell-Weil group with torsion exactly $(\mathbb{Z}/n\mathbb{Z})^{2g}$. This exists by the previous lemma and the fact that the map $B_{g,n} \rightarrow \ms{A}_{g,n}$ has the lifting property for points (\ref{Equi}, point 4). Then we can apply Lemma (\ref{local}) to the local ring $\mathcal{O}_{B_{g,n},P}$ and the fact that $X_{g,n} \times_{B_{g,n}} \xi$ comes with a canonical isomorphism of $(\bb Z / n \bb Z)^{2g}$ with its $n$-torsion to conclude. 
\end{proof}

Half of our work towards theorem (\ref{strf0}) is done. Now we need to show that the Mordell Weil group of the generic fiber $X_{g,n} \times_{B_{g,n}} \xi$ is torsion, so that it will be equal to $(\mathbb{Z}/n\mathbb{Z})^{2g}$. Next theorem gives us an answer when $\ms A_{g,n}$ is a variety.

\begin{Teo}[Shioda-Silverberg]\label{Silverberg}
Suppose $n \geq 3$. Let $\xi \rightarrow \ms{A}_{n,g}$ be the generic point. Then the Mordell-Weil group of $\ms{X}_{n,g}\times_{\ms{A}_{n,g}} \xi$ is isomorphic to $(\bb Z/n\bb Z)^{2g}$, the isomorphism coming from the level structure.
\end{Teo}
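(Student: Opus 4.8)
The plan is to split the computation into the torsion and the rank, treating the rank as the real content. The universal level-$n$ structure $\varPhi$ identifies $\ms X_{n,g}[n]$ with $(\bb Z/n\bb Z)^{2g}_{\ms A_{n,g}}$, so the $n$-torsion sections are defined over the base and give a canonical copy of $(\bb Z/n\bb Z)^{2g}$ inside $\op{MW}:=(\ms X_{n,g}\times_{\ms A_{n,g}}\xi)(K)$, where $K=\bb C(\ms A_{n,g})$. For the torsion one observes that, since $n\geq 3$, the group $\Gamma(n)$ is torsion-free, $\ms A_{n,g}=\Gamma(n)\backslash\mathfrak H_g$ is a $K(\Gamma(n),1)$ and $\pi_1(\ms A_{n,g})=\Gamma(n)$; hence any $K$-rational $m$-torsion point is a $\Gamma(n)$-invariant vector in $(\bb Z/m\bb Z)^{2g}$. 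A direct computation with the principal congruence subgroup (reducing to the action of the elementary symplectic transvections, which are divisible by $\gcd(m,n)$) shows that these invariants are exactly $(\bb Z/\!\gcd(m,n)\bb Z)^{2g}\subseteq A[n]$, so there is no torsion beyond the level torsion. It therefore suffices to prove that $\op{MW}$ has rank zero.

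First I would establish finite generation of $\op{MW}$ by the Lang--N\'eron theorem. This requires that the generic principally polarized abelian variety have trivial $K/\bb C$-trace, which follows from the shape of the monodromy: writing $\Lambda$ for the local system of first integral homology groups of the fibres, the representation of $\Gamma(n)$ on the standard $2g$-dimensional space $\Lambda\otimes\bb Q$ is irreducible and Zariski-dense in $\op{Sp}_{2g}$, so $(\Lambda\otimes\bb Q)^{\Gamma(n)}=0$. A nonzero trace would produce a constant abelian subvariety of $X_\xi$, hence a nonzero monodromy-invariant subspace of $\Lambda\otimes\bb Q$, a contradiction.

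Next I would translate sections into Hodge-theoretic data. Working analytically, the exponential sequence of the family
\[
0\longrightarrow \Lambda\longrightarrow \op{Lie}(\ms X_{n,g})\longrightarrow \ms X_{n,g}\longrightarrow 0
\]
of sheaves on $\ms A_{n,g}$ yields a boundary map $\delta:\op{MW}\to H^1(\Gamma(n),\Lambda)$. Since $H^0(\ms A_{n,g},\op{Lie})$ injects into $\op{MW}$ (as $H^0(\ms A_{n,g},\Lambda)=\Lambda^{\Gamma(n)}=0$) yet is a $\bb C$-vector space, it must vanish — otherwise $\op{MW}$ would be uncountable, contradicting finite generation — so $\delta$ is injective. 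Moreover, by exactness the image of $\delta$ lies in the kernel of $H^1(\ms A_{n,g},\Lambda\otimes\bb C)\to H^1(\ms A_{n,g},\op{Lie})$, i.e. in the image of the Hodge-filtration piece $F^0$: the period class of an algebraic section is a Hodge class.

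The hard part is to deduce rank zero from this. By Deligne's theorem of the fixed part, the torsion-free quotient of $\op{MW}$ injects into the $(0,0)$-part of the weight-zero piece $\op{Gr}^W_0 H^1(\ms A_{n,g},\Lambda\otimes\bb Q)$, equivalently into $\op{Hom}_{\mathrm{HS}}\!\big(\bb Q(0),\,H^1(\overline{\ms A}_{n,g},j_{!*}(\Lambda\otimes\bb Q))\big)$ for a smooth compactification $j:\ms A_{n,g}\hookrightarrow\overline{\ms A}_{n,g}$. The whole content of the theorem is the vanishing of this space for the standard symplectic variation, and this is the step I expect to be the genuine obstacle: it cannot be formal, since in positive characteristic the analogous rank can be positive (already for $\ms X_{1,4}$), so the argument must use the complex Hodge theory essentially. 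In the elliptic case $g=1$ this vanishing is Shioda's computation of the Mordell--Weil lattice of the universal elliptic surface over the modular curve via the Shioda--Tate formula; in higher dimension it rests on the cohomology of the congruence subgroups $\Gamma(n)$ with coefficients in the standard representation and the absence of $(0,0)$-Hodge classes there, which is precisely the input supplied by Shioda and Silverberg.
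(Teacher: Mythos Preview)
The paper does not prove this theorem at all: its ``proof'' is a single sentence referring the reader to Shioda's paper for $g=1$ and Silverberg's paper for $g\geq 2$, with a pointer to a survey for a sketch. So there is nothing to compare your argument against on the paper's side; the result is imported as a black box and then used (in Proposition~\ref{wkfrequiv}) to handle the cases $n=1,2$ via the \'etale cover $\ms A_{g,3n}\to\ms A_{g,n}$.

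Your outline is a reasonable reconstruction of the circle of ideas (torsion via monodromy invariants, Lang--N\'eron for finite generation, the boundary map into $H^1(\Gamma(n),\Lambda)$, and Deligne's fixed part to isolate the Hodge $(0,0)$-piece). But note that it is not a self-contained proof: you explicitly acknowledge that the crucial vanishing of Hodge classes in $H^1(\Gamma(n),\Lambda\otimes\bb Q)$ is ``precisely the input supplied by Shioda and Silverberg''. That step is the whole theorem --- Silverberg's original argument is exactly an analysis of this group cohomology (carried out via automorphic methods and the representation theory of $\op{Sp}_{2g}$), and Shioda's elliptic case is the Shioda--Tate computation you mention. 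So your write-up reduces the statement to itself, which is fine as an \emph{explanation of why one believes it}, but you should be aware that what you have written is a roadmap rather than an independent proof. If your goal was only to match the paper, you have in fact gone further than the paper does; if your goal was an actual proof, the essential computation is still outsourced.
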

\begin{proof} For a complete proof in the elliptic case (resp. higher dimension) we refer to \cite{Shioda1} (resp. \cite{Sil85}). The statement with a sketch of the proof can be found in \cite[Theorem 1 and 3, pp. 227-235]{Lox90}.
\end{proof}

For $n=1,2$ we need a few more steps.

\begin{Lem}\label{transc}
Let $A_F$ be a principally polarized abelian variety over a field $F$ and let $Q$ be a finitely generated purely transcendental extension of $F$. Define $A_Q := A_F \times_F Q$. Then the homomorphism of Mordell-Weil groups $A_F(F) \rightarrow A_Q(Q)$ is an isomorphism.
\end{Lem}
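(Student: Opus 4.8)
The plan is to treat injectivity and surjectivity separately. Injectivity is formal, while surjectivity reduces, via a rational-curve argument, to the classical fact that a map out of $\bb P^1$ cannot be a nonconstant map into an abelian variety. Throughout, the principal polarization plays no role: I only use that $A_F$ is an abelian variety, and since we work over $\bb C$ the field $F$ has characteristic zero. For injectivity, note that $F\hookrightarrow Q$ makes $\op{Spec}(Q)\to\op{Spec}(F)$ faithfully flat, hence an epimorphism of schemes, so two $F$-morphisms $\op{Spec}(F)\to A_F$ that agree after base change to $Q$ must already agree; the map is a group homomorphism because base change respects the group law.

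For surjectivity I would first reduce to a simple transcendental extension. Writing $Q=F(t_1,\dots,t_m)$ and $F'=F(t_1,\dots,t_{m-1})$, transitivity of base change gives $Q=F'(t_m)$ and $A_Q=A_{F'}\times_{F'}Q$, so an induction on $m$ reduces the claim to the surjectivity of $A_F(F)\to A_{F(t)}(F(t))$ for a single indeterminate $t$.

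Now a point of $A_{F(t)}(F(t))$ is exactly an $F$-morphism $\op{Spec}(F(t))\to A_F$, i.e. a rational map $\bb P^1_F\dashrightarrow A_F$, since $\op{Spec}(F(t))$ is the generic point of $\bb P^1_F$ and the point is described by finitely many rational functions, which have finitely many poles. Because $A_F$ is proper and $\bb P^1_F$ is a smooth curve, the valuative criterion of properness applied at each closed point (whose local ring is a DVR with fraction field $F(t)$) extends this rational map uniquely to a morphism $f\colon\bb P^1_F\to A_F$. The essential point is that $f$ is then constant: $H^0(\bb P^1_F,\Omega^1)=0$, whereas $\Omega^1_{A_F}$ is generated by its translation-invariant global forms, so every such form pulls back to $0$, forcing $df=0$ and hence (in characteristic zero) $f$ constant; equivalently, $A_F$ contains no rational curves. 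A constant $F$-morphism carries $\bb P^1_F$ to a single closed point, which is the image of the $F$-rational point $0\in\bb P^1_F(F)$ and so lies in $A_F(F)$; therefore the original $F(t)$-point is the base change of a point of $A_F(F)$, giving surjectivity.

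I expect the only genuinely substantive step to be the constancy of $f\colon\bb P^1_F\to A_F$; the extension of the rational map to a morphism and the descent to an $F$-point are routine once the rational-map picture is in place. The one subtlety worth flagging is that both the extension and the constancy arguments must be run over the (possibly non-closed) field $F$ rather than over $\overline{F}$, but $H^0(\bb P^1_F,\Omega^1)=0$ and the invariance of $\Omega^1_{A_F}$ hold over any field, so this causes no difficulty.
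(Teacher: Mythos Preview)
Your proof is correct, and it takes a genuinely different route from the paper's argument.

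The paper exploits the principal polarization to translate the question into one about Picard groups: identifying $A_F(F)$ with $\Pic^0(A_F)$ (using that $A_F$ has a rational point so the relative Picard functor agrees with the ordinary Picard group), it then factors the comparison through the diagram $A_Q\hookrightarrow A_F\times\bb A^n_F\to A_F$. Homotopy invariance gives $\Pic(A_F)\cong\Pic(A_F\times\bb A^n_F)$, restriction to the generic fibre $A_Q$ is surjective on divisor classes, and injectivity of $\Pic(A_F)\to\Pic(A_Q)$ follows from sheaf properties; finally one checks the isomorphism on $\Pic$ restricts to one on $\Pic^0$. By contrast, you argue directly on points: after reducing to a single transcendental, a $Q$-point is a rational map $\bb P^1_F\dashrightarrow A_F$, which extends by properness and is then constant because abelian varieties contain no rational curves in characteristic zero. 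Your approach is more elementary, avoids the detour through line bundles, and shows clearly that the polarization hypothesis is superfluous. The paper's approach, on the other hand, fits naturally with the Picard-group machinery used throughout Section~\ref{prel} and makes the passage from $\Pic$ to $\Pic^0$ explicit, which is the form in which the result is applied later.
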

\begin{proof}
Since $A_F$ is principally polarized, the Mordell-Weil group of $A$ is isomorphic to $\Pic^0_{A_F/F}(F)$, and the Mordell-Weil group of $A_Q$ is isomorphic to $\Pic^0_{A_Q}(Q)$. Both $A_Q$ and $A_F$ have a rational point, so we have $\Pic_{A_F/F}(F)=\Pic(A_F)$, $\Pic_{A_Q/Q}(Q)=\Pic(A_Q)$ (see for example \cite[Remark 9.2.11]{FAG}). Moreover $A_Q$ and $A_F$ are smooth and thus locally factorial, so their Picard groups are isomorphic to the group of divisors modulo rational equivalence. Consider the following commutative triangle:

$$\xymatrixcolsep{5pc}
\xymatrix{& A_F \times \bb A^n_F \ar@{->}[d]^{\pi} \ar@{<-}[dl]_{i}\\ 
A_Q \ar@{->}[r] & A_F}$$

Here $n$ is the degree of transcendence of $Q/F$ and the map $i$ is the inclusion of the generic fiber. The pullback through $\pi$ is an isomorphism on Picard groups. The pullback through $i$ is surjective. The map $Q \rightarrow F$ is a smooth covering, so the pullback $\Pic(A_F)=\Pic_{A_F /F}(F) \rightarrow \Pic_{A_Q/Q}(Q)=\Pic(A_Q)$ is injective. This shows that the pullback through $A_Q \rightarrow A_F$ induces an isomorphism on Picard groups.

Now recall that $\Pic_{A_Q/Q}$ is isomorphic to $\Pic_{A_F/F} \times_F Q$ as a $Q$-scheme, and in particular the map $\Pic_{A_Q/Q}\rightarrow \Pic_{A_F/F}$ has connected fibers. This implies that if $L \in \Pic(A_F)$ pulls back to $L' \in \Pic^0_{A_Q}$ then $L$ must belong to $\Pic^0_{A_F}$, and the isomorphism on the Picard groups then implies the isomorphism on the $\Pic^0$.

We can now conclude as by definition the image of a point $p \in \Pic_{A_Q/Q}(Q)$ representing the pullback of a line bundle $L \in \Pic_{A_F/F}(F)$ is $L$ itself. Then putting everything together we proved that the pull-back along the map $A_Q\to A_F$ induces an isomorphism on the Mordell-Weil groups, which proves our claim.
\end{proof}

\begin{Prop}\label{wkfrequiv}
The generic fiber of $X_{g,n} \rightarrow B_{g,n}$ has Mordell-Weil group equal to $(\mathbb{Z}/n\mathbb{Z})^{2g}$.
\end{Prop}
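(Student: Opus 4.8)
The plan is to leverage the preceding proposition, which already computes the torsion of the Mordell--Weil group of the generic fibre as $(\bb Z/n\bb Z)^{2g}$ via the level structure. It therefore suffices to prove that the whole Mordell--Weil group $A(K)$ is finite, where $A:=X_{g,n}\times_{B_{g,n}}\xi$ is an abelian variety over the honest field $K:=K(B_{g,n})$; once finiteness is known, the preceding proposition identifies $A(K)$ with its torsion subgroup $(\bb Z/n\bb Z)^{2g}$. The mechanism for proving finiteness will be to transport the Shioda--Silverberg computation (Theorem \ref{Silverberg}), which is available only for level $\geq 3$, along a purely transcendental extension, using Lemma \ref{transc} to guarantee that the Mordell--Weil group is insensitive to such extensions.

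Fix an integer $N\geq 3$ with $n\mid N$ (take $N=n$ when $n\geq 3$). Adjoining to $A$ a full symplectic level-$N$ structure refining the given level-$n$ structure defines a finite field extension $L/K$, over which $A_L:=A\times_K L$ is a principally polarized abelian variety with level-$N$ structure, i.e. an object of $\ms A_{g,N}(L)$. The associated morphism $\op{Spec}(L)\to\ms A_{g,N}$ covers the composite $\op{Spec}(L)\to\op{Spec}(K)\to\ms A_{g,n}$, whose image is the generic point of $\ms A_{g,n}$; since $\ms A_{g,N}\to\ms A_{g,n}$ is finite and $\ms A_{g,N}$ is irreducible, this morphism factors through the generic point $\eta_N$ of $\ms A_{g,N}$. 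Hence $A_L$ is the base change along $\kappa(\eta_N)\hookrightarrow L$ of the generic fibre $\ms X_{g,N}\times_{\ms A_{g,N}}\eta_N$, whose Mordell--Weil group is the finite group $(\bb Z/N\bb Z)^{2g}$ by Theorem \ref{Silverberg}.

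It then remains to check that $L/\kappa(\eta_N)$ is purely transcendental, for then Lemma \ref{transc} yields $A_L(L)\cong(\bb Z/N\bb Z)^{2g}$, so that $A(K)\hookrightarrow A_L(L)$ forces $A(K)$ to be finite, and we conclude as above. To verify this I would identify $L$ with the function field of $\widetilde{B}:=B_{g,n}\times_{\ms A_{g,n}}\ms A_{g,N}$. This $\widetilde{B}$ is a scheme, being finite \'etale over the scheme $B_{g,n}$, and it is an open subscheme of the pullback to $\ms A_{g,N}$ of the vector bundle of which $B_{g,n}$ is an open subscheme (Proposition-Definition \ref{Equi}). Because $N\geq 3$, the stack $\ms A_{g,N}$ is an honest variety, so this pullback is generically a trivial bundle and the generic fibre of $\widetilde{B}\to\ms A_{g,N}$ is an open subset of affine space over $\kappa(\eta_N)$; therefore $L=K(\widetilde{B})$ is purely transcendental over $\kappa(\eta_N)$.

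The step I expect to be the genuine obstacle is exactly this last one in the cases $n\in\{1,2\}$. There $\ms A_{g,n}$ is not generically a scheme, since the generic automorphism group $\{\pm 1\}$ is nontrivial, so $K(B_{g,n})$ need \emph{not} be purely transcendental over the function field of $\ms A_{g,n}$, and one cannot compare $A$ directly with a level computation performed on $\ms A_{g,n}$ itself. The role of passing to $\ms A_{g,N}$ with $N\geq 3$ is precisely to rigidify these automorphisms, turning the equivariant approximation into a genuine vector bundle over a variety; the substance of the argument is confirming that this base change really does make $L$ purely transcendental over $\kappa(\eta_N)$, which is what lets Theorem \ref{Silverberg} and Lemma \ref{transc} close the cases $n=1,2$ as well.
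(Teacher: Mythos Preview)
Your proposal is correct and follows essentially the same route as the paper. The paper forms the fibre product $B_{g,3n}:=B_{g,n}\times_{\ms A_{g,n}}\ms A_{g,3n}$ (your $\widetilde B$ with $N=3n$), observes that its function field is purely transcendental over $\kappa(\eta_{3n})$ because $B_{g,3n}$ is open in a vector bundle over the variety $\ms A_{g,3n}$, applies Lemma~\ref{transc} and Theorem~\ref{Silverberg} to see that the Mordell--Weil group over $\zeta=\op{Spec}K(B_{g,3n})$ is torsion, and then uses the injection $A(K)\hookrightarrow A_L(L)$ together with the already-computed torsion to conclude; this is exactly your argument with $N$ in place of $3n$.
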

\begin{proof}
Consider the following cartesian cube:

\begin{center}

\begin{tikzcd}[back line/.style={densely dotted}, row sep=2em, column sep=2em]
& X_{g,3n} \ar{dl}[swap]{\phi'_n} \ar{rr}{\rho'_n} \ar{dd}[near end]{\pi'_n} 
  & & B_{g,3n} \ar{dd}{\pi_n} \ar{dl}[swap,sloped,near start]{\phi'} \\
X_{g,n} \ar[crossing over]{rr}[near start]{\rho'} \ar{dd}[swap]{\pi'} 
  & & B_{g,n} \\
& \ms{X}_{g,3n} \ar[near start]{rr}{\rho_n} \ar{dl}{\phi'} 
  & & \ms{A}_{g,3n} \ar{dl}{\phi} \\
\ms{X}_{g,n} \ar{rr}{\rho} & & \ms{A}_{g,n} \ar[crossing over, leftarrow, near start]{uu}{\pi}
\end{tikzcd}

\end{center}

The $\pi$ maps are equivariant approximations, the $\phi$ maps are \'etale finite, the $\rho$ maps are families of abelian varieties. Let $\zeta$ be the generic point of $B_{g, 3n}$. First we want to understand the Mordell-Weil group of the generic fiber $X_{g,3n} \times_{B_{g,3n}} \zeta$. As $B_{g,3n}$ is an open subset of a vector bundle over $\ms{A}_{g,3n}$ the generic point of $B_{g,3n}$ is a purely transcendental extension of the generic point of $\ms{A}_{g,3n}$. Then by Lemma (\ref{transc}) we can conclude that the Mordell-Weil group of $X_{g,3n} \times_{B_{g,3n}} \zeta$ is isomorphic to the Mordell-Weil group of the generic fiber of $\ms{X}_{g,3n} \rightarrow \ms{A}_{g,3n}$. As $3n$ is greater or equal to three he latter is torsion due to Silverberg's theorem.

Now we already know that the Mordell-Weil group of $X_{g,n} \times_{B_{g,n}} \xi$ has torsion equal to $(\mathbb{Z}/n\mathbb{Z})^{2g}$, and as the \'etale map $X_{g,n} \times_{B_{g,3n}} \zeta \rightarrow X_{g,n} \times_{B_{g,n}} \xi$ is an epimorphism it must also inject into the Mordell-Weil group of $X_{g,3n} \times_{B_{g,3n}} \zeta$. The latter is torsion, so the Mordell-Weil group of $X_{g,n} \times_{B_{g,n}} \xi$ must be equal to $(\mathbb{Z}/n\mathbb{Z})^{2g}$.
\end{proof}

\begin{proof}[Proof of theorem \ref{strf0}] any two sections $\ms{U} \times_{\ms{A}_{g,n}} B_{g,n} \rightarrow \ms{U}\times_{\ms{A}_{g,n}} X_{g,n}$ that induce the same rational point in the generic abelian variety above must be generically equal. But two maps from an irreducible and reduced scheme to a separated scheme that are generically equal must be the same \cite[Tag 0A1Y]{StPr}. This, in addition to the fact that by definition there exist $n^{2g}$ canonical distinct sections of $\ms{X}_{g,n}\rightarrow \ms{A}_{g,n}$, coming from the level structure, concludes the proof of our theorem.
\end{proof}

\section{Preliminaries on the Picard group of $\ms X_{g,n}$.}\label{prel}

Let $g\geq 2$ and $n\geq 1$. In this section, we will give a partial description of the group of the rigidified line bundles on $\ms X_{g,n}$, which allows us to prove Theorem \ref{corxg}.\vspace{0.1cm}

First of all, we introduce some natural line bundles on the universal abelian variety.

\begin{Def}\label{rigi}
We will call \emph{rigidified $n$-roots line bundles} of $\ms X_{g,n}\to\ms A_{g,n}$ the rigidified line bundles over $\ms X_{g,n}$  which are $n$-roots of the trivial line bundle $\oo_{\ms X_{g,n}}$.
\end{Def}

Observe that the rigidified $n$-roots line bundles correspond to the sections of a substack of $\ms X^{\vee}_{g,n}$, which is finite and \'etale over $\ms A_{g,n}$. Over any $\bb C$-point $(A,\lambda,\varphi)$, they correspond to the $n$-torsion points $A^{\vee}[n]$ of the dual abelian variety. Using the universal polarization $\Lambda:\ms X_{g,n}\cong\ms X^{\vee}_{g,n}$, we obtain an isomorphism between the rigidified $n$-roots line bundles and the the group of $n$-torsion sections of $\ms X_{g,n}\to\ms A_{g,n}$. Using the universal level $n$-structure $\varPhi$, we see immediately the this group is isomorphic to $\left(\bb Z/n\bb Z\right)^{2g}$.

\begin{Def}
Let $(A\to S,\lambda)$ be a family of p.p.a.v. over $S$. Let $A^{\vee}$ the dual abelian scheme and let $\mt P$ the rigidified Poincar\'e line bundle on $A\times_S A^{\vee}$. Pulling back $\mt P$ through the map $(Id_A,\lambda):A\rightarrow A\times_S A^{\vee}$, we get a rigidified line bundle on $A$. Since this line bundle is functorial in $S$, it defines a line bundle over the universal abelian variety $\ms X_{g,n}$: we will call this sheaf \emph{rigidified canonical line bundle $\mt L_{\Lambda}$}.
\end{Def}
\begin{Oss}\label{oss} 
Let $(A,\lambda,\varphi)$ be a $\mathbb C$-point in $\ms A_{g,n}$. There exists a line bundle $\mt M$, unique up to translation, over $A$ inducing the polarization. The line bundle $\mt L_{\Lambda}$, restricted to a $(A,\lambda,\varphi)$ is equal to $\mt M^2$ in $\op{NS}(A)$.

Indeed, this is equivalent to showing that $\lambda(\mt L_{\Lambda}|_{(A,\lambda,\varphi)}\otimes \mt M^{-2})=0$. By \cite[Proposition 6.1]{Mum94}, we have that $\lambda(\mt L_{\Lambda})=2\Lambda$. Then, by definition of universal polarization, $\Lambda|_{(A,\lambda,\varphi)}=\lambda=\lambda(\mt M)$ from which the assertion follows immediately.
\end{Oss}

The proof of next lemma can be found in \cite[Proposition 6.1]{Mum94}.

\begin{Lem}\label{rigidity} Given a commutative diagram of schemes
$$
\xymatrix{
X\ar[dr]_p\ar[rr]^f & & Y\ar[dl]^q\\
& S&}
$$
where $X$ is an abelian scheme over a connected scheme $S$. If, for one point $s\in S$, $f(X_s)$ is set-theoretically a single point, then there is a section $0:S\to Y$ such that $f=0\circ p$.
\end{Lem}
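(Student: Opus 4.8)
The plan is to prove the relative rigidity lemma by the classical argument, adapted to a base $S$ which we only assume connected, hence possibly non-reduced. Let $e:S\to X$ be the zero section of the abelian scheme $p$, and set $0:=f\circ e:S\to Y$. Since $f$ is an $S$-morphism and $p\circ e=\op{id}_S$, we get $q\circ 0=\op{id}_S$, so $0$ is a section of $q$; the entire content of the statement is therefore the identity $f=0\circ p$. I would use freely that $p$ is proper and flat with geometrically integral fibres and that $p_*\oo_X=\oo_S$ holds universally (both standard for abelian schemes), and that $q$ is separated.

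First I would show that the locus $T:=\{s'\in S: f|_{X_{s'}}\text{ is set-theoretically constant}\}$ is open. Given $s'\in T$ with $f(X_{s'})=\{y_0\}$, pick an affine open $V\subseteq Y$ containing $y_0$. Since $p$ is proper, $W:=p\big(f^{-1}(Y\setminus V)\big)$ is closed, and $s'\notin W$ because $X_{s'}$ maps into $V$; hence for every $s''$ in the open set $S\setminus W$ the fibre $X_{s''}$ maps into the affine scheme $V$. As $X_{s''}$ is proper and geometrically integral, $H^0(X_{s''},\oo_{X_{s''}})=\kappa(s'')$, so any morphism from $X_{s''}$ to an affine scheme is constant; thus $S\setminus W\subseteq T$ and $T$ is open.

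Next I would show $T$ is closed, which together with $s\in T$ and the connectedness of $S$ forces $T=S$. Because $q$ is separated, $R:=(f\times_S f)^{-1}(\Delta_{Y/S})\subseteq X\times_S X$ is closed, and $s'\in T$ exactly when the fibre $(X\times_S X)_{s'}$ is contained in $R$, i.e. when it misses the open set $R^{c}$. The projection $\rho:X\times_S X\to S$ is flat and locally of finite presentation, hence open, so $\rho(R^{c})=\{s':(X\times_S X)_{s'}\cap R^{c}\neq\emptyset\}$ is open and $T=S\setminus\rho(R^{c})$ is closed. Therefore $f$ contracts every fibre of $p$.

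It remains to upgrade this fibrewise, set-theoretic statement to the equality of morphisms $f=0\circ p$; this is the step that genuinely uses $p_*\oo_X=\oo_S$ and is the main obstacle, since $S$ (hence $X$) need not be reduced, so a mere topological identification of the two maps is not enough. I would argue locally: around any $s'\in S=T$ the openness argument above provides an affine open $V=\op{Spec}(B)\subseteq Y$ and an affine open neighbourhood $S'=\op{Spec}(R')$ of $s'$ with $f\big(p^{-1}(S')\big)\subseteq V$. Then $f$ restricted to $p^{-1}(S')$ is a morphism to $\op{Spec}(B)$, hence corresponds to a ring map $B\to\Gamma\big(p^{-1}(S'),\oo\big)=\Gamma\big(S',p_*\oo_X\big)=R'$; this exhibits $f|_{p^{-1}(S')}$ as factoring through $p$, say $f|_{p^{-1}(S')}=h_{S'}\circ p$. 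Composing with $e$ identifies $h_{S'}$ with $0|_{S'}$, so the local factorizations agree on overlaps and glue to the global identity $f=0\circ p$, as required. The two points demanding care are thus the clopenness of $T$ — especially its closedness, which rests on the openness of the flat map $X\times_S X\to S$ — and this final passage from fibrewise constancy to an honest factorization of morphisms over a possibly non-reduced base.
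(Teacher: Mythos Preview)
Your argument is correct and is essentially the classical proof. The paper does not give its own proof of this lemma at all: it simply records that ``the proof of next lemma can be found in \cite[Proposition 6.1]{Mum94}'', i.e.\ Mumford--Fogarty--Kirwan's relative rigidity lemma. What you have written is precisely that argument---openness of the constancy locus via properness of $p$ and the vanishing of global functions on the fibres, closedness via the diagonal and the openness of the flat map $X\times_S X\to S$, and the factorization via $p_*\oo_X=\oo_S$---so there is no real difference in approach to compare.

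One small remark: you explicitly invoke that $q$ is separated, which is not stated in the lemma as written. You are right that it is needed for your closedness step (so that $\Delta_{Y/S}$ is closed), and in the cited reference this hypothesis is present; in the paper's setting it is harmless since the only applications are to $Y=A^\vee$, an abelian scheme. But it is worth flagging that the lemma as literally stated is slightly underspecified on this point.
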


Now we are going to study the image of the homomorphism (\ref{ex}).

\begin{Prop}\label{inj}Let $A$ be an abelian scheme over $S$ and $\mt L$ a line bundle on $A$. Suppose that there exists a closed point $s\in S$ such that $\mt L_s=\oo_{A_s}$ in $NS(A_s)$. Then $\lambda(\mt L)$ is the zero homomorphism in $\op{Hom}_S(A,A^{\vee})$.
\end{Prop}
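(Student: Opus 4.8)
The plan is to feed the homomorphism $f := \lambda(\mt L)\colon A\to A^{\vee}$ directly into the rigidity Lemma \ref{rigidity}. The essential point is that $\lambda(\mt L)$ is not merely a morphism but a \emph{homomorphism} of abelian schemes over $S$ (this is recorded in the discussion following (\ref{ex}), where the image of $\lambda$ lands in $\op{Hom}_S(A,A^{\vee})$), and that its formation commutes with base change, since on a point $a$ it is given by the fiberwise formula $t_a^*\mt L\otimes\mt L^{-1}$. In particular the restriction of $f$ to the fiber over $s$ is exactly $\lambda(\mt L_s)\colon A_s\to A_s^{\vee}$.

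First I would use the hypothesis. Since $\mt L_s=\oo_{A_s}$ in $\op{NS}(A_s)$, the class of $\mt L_s$ in the N\'eron--Severi group is trivial, so $\lambda(\mt L_s)$ is the zero homomorphism. Concretely this means that $f(A_s)=\{O_{A_s^{\vee}}\}$ is set-theoretically a single point. This is precisely the hypothesis required to apply Lemma \ref{rigidity} with $X=A$, $Y=A^{\vee}$, $p=\pi$ (taking $S$ connected, as it is in all our applications).

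The lemma then produces a section $\sigma\colon S\to A^{\vee}$ such that $f=\sigma\circ\pi$, and it remains only to identify $\sigma$ with the zero section. Here I would exploit that $f$ is a homomorphism, and hence carries the zero section $O_A$ of $A$ to the zero section $O_{A^{\vee}}$ of $A^{\vee}$. Precomposing $f=\sigma\circ\pi$ with $O_A$ and using $\pi\circ O_A=\op{Id}_S$ yields $O_{A^{\vee}}=f\circ O_A=\sigma$. Therefore $f=O_{A^{\vee}}\circ\pi$ is the zero homomorphism in $\op{Hom}_S(A,A^{\vee})$, as claimed.

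I do not expect a serious obstacle: the argument is a formal consequence of rigidity, and the genuine content is supplied by Lemma \ref{rigidity}. The only points requiring care are the base-change compatibility of $\lambda(\mt L)$, which guarantees that its fiber over $s$ is genuinely $\lambda(\mt L_s)$, and the connectedness of $S$ demanded by Lemma \ref{rigidity}; this connectedness holds in every situation where we apply the proposition, the base always mapping to the irreducible stack $\ms A_{g,n}$.
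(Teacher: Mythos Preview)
Your argument is correct and is essentially the same as the paper's: both restrict $\lambda(\mt L)$ to the fiber over $s$, use the hypothesis to see that this fiber is contracted to a point, apply the rigidity Lemma~\ref{rigidity} to obtain a factorization through a section of $A^{\vee}$, and then use that $\lambda(\mt L)$ is a homomorphism to identify that section with the zero section. Your write-up is more explicit about base-change compatibility and the connectedness hypothesis of Lemma~\ref{rigidity}, but the strategy is identical.
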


\begin{proof} By hypothesis $\lambda(\mt L)_s=\lambda(\mt L_s)$ is the zero homomorphism. By Lemma \ref{rigidity}, $\lambda(\mt L)$ factorizes as the structure morphism $A\to S$ and a section of the dual abelian scheme. Since $\Lambda$ is an homomorphism of abelian schemes, it must be the zero homomorphism.
\end{proof}

Let $(A,\lambda,\varphi)$ be a $\mathbb C$-point of $\ms A_{g,n}$. Consider the homomorphism
$$
res:\bigslant{\Pic(\ms X_{g,n})}{\Pic(\ms A_{g,n})}\longrightarrow \Pic(A)\longrightarrow \text{NS}(A)
$$
where the first row is given by restriction and the second one is the first Chern class map. We have the following

\begin{Prop}\label{incl}
For any $\mathbb C$-point $(A,\lambda,\varphi)$ in $\ms A_{g,n}$, we have an exact sequence of abstract groups
\begin{equation}
0\longrightarrow \left(\bigslant{\mathbb Z}{n\mathbb Z}\right)^{2g}\longrightarrow \bigslant{\Pic(\ms X_{g,n})}{\Pic(\ms A_{g,n})}\stackrel{res}{\longrightarrow} \text{NS}(A)
\end{equation}
where the kernel is the group of the rigidified $n$-roots line bundles.
\end{Prop}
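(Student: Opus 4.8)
The plan is to realize the claimed sequence as the inclusion of the rigidified $n$-roots line bundles into the relative Picard group, together with the computation that they exhaust the kernel of $res$. First I would record that, since $\pi\colon\ms X_{g,n}\to\ms A_{g,n}$ carries the zero section $O_{\ms X_{g,n}}$ and its structure homomorphism is universally an isomorphism, the quotient $\bigslant{\Pic(\ms X_{g,n})}{\Pic(\ms A_{g,n})}$ is canonically the group of rigidified line bundles on $\ms X_{g,n}\to\ms A_{g,n}$ (the identification recalled in Section \ref{1}, following \cite{FAG}). Under this identification $res$ is well defined on the quotient: any line bundle $\pi^*\mt M$ restricts to the trivial bundle along the fibre $A$ over $(A,\lambda,\varphi)$, hence has vanishing first Chern class in $\op{NS}(A)$.

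Next I would compute $\ker(res)$. A rigidified line bundle $\mt L$ lies in $\ker(res)$ precisely when $c_1(\mt L|_A)=0$, i.e. when $\mt L|_A$ is algebraically trivial on the single fibre over $(A,\lambda,\varphi)$. Applying Proposition \ref{inj} to the universal family at the closed point $s=(A,\lambda,\varphi)$ — this is where irreducibility of $\ms A_{g,n}$ enters — forces $\lambda(\mt L)=0$ in $\Hom_{\ms A_{g,n}}(\ms X_{g,n},\ms X^\vee_{g,n})$, so that $\mt L$ lies in $\op{Pic}^0_{\ms X_{g,n}/\ms A_{g,n}}$ and therefore defines a global section of the universal dual abelian variety $\pi^\vee\colon\ms X^\vee_{g,n}\to\ms A_{g,n}$. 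Conversely every such section is a fibrewise algebraically trivial rigidified line bundle, so $\ker(res)=\Hom_{\ms A_{g,n}}(\ms A_{g,n},\ms X^\vee_{g,n})$.

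To finish I would transport the Mordell--Weil computation across the universal polarization. The isomorphism $\Lambda\colon\ms X_{g,n}\cong\ms X^\vee_{g,n}$ identifies the sections of $\ms X^\vee_{g,n}$ with those of $\ms X_{g,n}$, which by Theorem \ref{strf0} (taken with $\ms U=\ms A_{g,n}$) form the group $(\bb Z/n\bb Z)^{2g}$ and are in particular all $n$-torsion. An $n$-torsion section of $\ms X^\vee_{g,n}$ is exactly an $n$-root of $\oo_{\ms X_{g,n}}$, so $\ker(res)$ is precisely the group of rigidified $n$-roots line bundles, which by the discussion after Definition \ref{rigi} is isomorphic to $(\bb Z/n\bb Z)^{2g}$. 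Injectivity of the first arrow is then formal, since it is the inclusion of this subgroup into the relative Picard group, and the sequence is exact.

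I expect the only genuine obstacle to be the middle step: getting Proposition \ref{inj} to upgrade the one-fibre hypothesis that $res$ actually tests into the global vanishing $\lambda(\mt L)=0$, and then matching ``global section of $\ms X^\vee_{g,n}$'' with Theorem \ref{strf0}, which was phrased for $\ms X_{g,n}$ and must be carried across $\Lambda$. Everything else — the rigidification identification, well-definedness of $res$, and the injectivity of the first map — is formal.
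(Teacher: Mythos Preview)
Your overall strategy is exactly the paper's: identify the quotient with rigidified line bundles, show via Proposition \ref{inj} that the kernel of $res$ consists of sections of $\ms X^\vee_{g,n}$, then transport Theorem \ref{strf0} across $\Lambda$. The one genuine gap is in the middle step, and it is not quite the one you flag. Proposition \ref{inj}, and the rigidity lemma (Lemma \ref{rigidity}) it rests on, are stated for an abelian \emph{scheme} over a connected \emph{scheme} $S$ with an honest closed point $s\in S$. For $n\leq 2$ the base $\ms A_{g,n}$ is a Deligne--Mumford stack that is not generically a scheme, and the $\mathbb C$-point $(A,\lambda,\varphi)$ has nontrivial automorphisms; you cannot simply plug it into Proposition \ref{inj} as ``the closed point $s$''. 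Invoking irreducibility does not address this: the issue is the stacky nature of the base, not its connectedness. The identification of $\Pic(\ms X_{g,n})/\Pic(\ms A_{g,n})$ with rigidified line bundles from Section \ref{1} is likewise only justified there over a scheme.

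The paper closes this gap by inserting an equivariant approximation. It pulls the whole picture back along $B_{g,n}\to\ms A_{g,n}$ (Definition--Proposition \ref{Equi}), lifts $(A,\lambda,\varphi)$ to a genuine closed point $s\in B_{g,n}$ using point (3), applies Proposition \ref{inj} to the abelian scheme $X_{g,n}\to B_{g,n}$ to get the exact sequence
\[
0\longrightarrow X_{g,n}^{\vee}(B_{g,n})\longrightarrow Pic_{X_{g,n}/B_{g,n}}(B_{g,n})\longrightarrow \op{NS}(A),
\]
and then uses point (4) of \ref{Equi} (the isomorphisms $\Pic(\ms X_{g,n})\cong\Pic(X_{g,n})$ and $\Pic(\ms A_{g,n})\cong\Pic(B_{g,n})$) to carry the sequence back to the stack. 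Once you insert this passage to schemes, your argument becomes literally the paper's proof; without it, the appeal to Proposition \ref{inj} is unjustified precisely in the cases $n=1,2$ that motivated the whole equivariant-approximation machinery.
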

\begin{proof}Consider the cartesian diagram
$$
\xymatrix{
X_{g,n}\ar[r]\ar[d]& \ms X_{g,n}\ar[d]\\
B_{g,n}\ar[r] &\ms A_{g,n}
}$$
where $B_{g,n}\to \ms A_{g,n}$ is an equivariant approximation as in the Definition-Proposition \ref{Equi}. It induces a commutative diagram of Picard groups
$$
\xymatrix{
\Pic( X_{g,n})& \Pic(\ms X_{g,n})\ar[l]\\
\Pic(B_{g,n})\ar[u] &\Pic(\ms A_{g,n}).\ar[l]\ar[u]
}
$$
We can easily see that $X_{g,n}\to \ms X_{g,n}$ is an equivariant approximation for $\ms X_{g,n}$. In particular, in the diagram of Picard groups, the horizontal arrows are isomorphisms (by Definition-Proposition \ref{Equi}(4)) and the vertical ones are injective.

Let $s$ be a lifting of $(A,\lambda,\varphi)$ over $B_{g,n}$, which exists by Definition-Proposition \ref{Equi}(3). 
Consider the homomorphism of groups
$$
Pic_{X_{g,n}/B_{g,n}}(B_{g,n})\longrightarrow Pic_{X_{g,n}/B_{g,n}}(s)=\Pic(A)\longrightarrow \text{NS}(A)
$$
where the first row is given by restriction and the second one is the first Chern class map. Proposition \ref{inj} implies that
the sequence of groups
$$
0\longrightarrow X_{g,n}^{\vee}(B_{g,n})\longrightarrow Pic_{X_{g,n}/B_{g,n}}(B_{g,n})\longrightarrow \text{NS}(A)
$$ is exact. As observed in Section \ref{1}, we can identify the abstract group $Pic_{X_{g,n}/B_{g,n}}(B_{g,n})$ with $\Pic(X_{g,n})/\Pic(B_{g,n})$. By the diagram above, it is also isomorphic to the group $\Pic(\ms X_{g,n})/\Pic(\ms A_{g,n})$. The assertions about the kernel follows from the results of Section \ref{0-deg}.
\end{proof}

Using this we can complete the description of the Picard group of the universal abelian variety without level structure.\\\\
\begin{dimo} \emph{of Theorem \ref{corxg}}. By \cite[Lemma p. 359]{ACGH} there exists a Jacobian variety $J(C)$ of a smooth curve of genus $g$ with Neron-Severi group generated by the theta divisor $\theta$. We set $m$ the index of the map $res$ in the Proposition \ref{incl} with $(A,\lambda,\varphi)=(J(C),\theta,\varphi)$. Consider the morphism of complexes
$$
\xymatrix{
0\ar[r]&\Pic(\ms A_g)\ar[r]\ar[d]& \Pic(\ms X_g)\ar[r]\ar[d]& m\cdot NS(J(C))\ar[r]\ar[d]& 0\\
0\ar[r]&\Pic(\ms M_g)\ar[r]& \Pic(\ms J_g)\ar[r]& 2\cdot NS(J(C))\ar[r]& 0
}
$$
The top sequence is exact by Proposition \ref{incl} in the case $n=1$. The exactness of the bottom sequence comes from \cite[Theorem 1]{Kou91} (see also \cite{MV}[Subsection 7]). Observe that the last vertical map is surjective by the existence of the line bundle $\mt L_{\Lambda}$ (see Remark \ref{oss}). It must be also injective, because otherwise we can construct a line bundle on $\mt J_g$ which generates the Neron-Severi of $J(C)$. Then $m=2$ and the first assertion follows immediately. As recalled in the introduction the first vertical is an isomorphism when $g\geq 3$, so the second assertion will follow  by the snake lemma.
\end{dimo}

\section{The universal theta divisor.}\label{thediv}
The main result of this section is the following

\begin{Teo}\label{polex}Assume that $g\geq 4$. There exists a line bundle over the universal abelian variety $\ms X_{g,n}\to\ms A_{g,n}$ inducing the universal polarization if and only if $n$ is even. Moreover, if it exists, it can be chosen symmetric.
\end{Teo}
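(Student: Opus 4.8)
The plan is to reformulate the statement in terms of symmetric line bundles and quadratic refinements of the Weil pairing, which separates the genuinely global (monodromy) content from the fibrewise theory. Recall that ``inducing the universal polarization'' means producing $\ms M\in\Pic(\ms X_{g,n})$ with $\lambda(\ms M)=\Lambda$, for $\lambda$ the homomorphism (\ref{ex}). On a single geometric fibre $(A,\lambda,\varphi)$ the line bundles with $\lambda(\ms M)=\lambda$ form a torsor under $\Pic^0(A)=A^{\vee}$, and the \emph{symmetric} ones among them form a torsor under $A^{\vee}[2]\cong A[2]$. By Mumford's theory of symmetric line bundles each such $\ms M$ carries a function $e_*^{\ms M}:A[2]\to\{\pm1\}$ which is a quadratic form refining $\textbf{e}_2^{\lambda}$, and $\ms M\mapsto e_*^{\ms M}$ is a bijection, functorial in families, between symmetric line bundles inducing $\lambda$ and quadratic refinements of $\textbf{e}_2^{\lambda}$. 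First I would record a rigidity observation that lets one test the condition $\lambda(\ms M)=\Lambda$ on a single fibre: if $\lambda(\ms M)$ and $\Lambda$ agree at one geometric point, then $\lambda(\ms M^{\otimes2}\otimes\mt L_{\Lambda}^{-1})$ vanishes there (using $\lambda(\mt L_{\Lambda})=2\Lambda$ from Remark \ref{oss}), so Proposition \ref{inj} forces $2\lambda(\ms M)=2\Lambda$, whence $\lambda(\ms M)=\Lambda$ since $\Hom_{\ms A_{g,n}}(\ms X_{g,n},\ms X_{g,n}^{\vee})$ is torsion free.

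For the ``if'' direction, suppose $n$ is even. Then the universal level $n$-structure $\varPhi$ restricts to a symplectic trivialization $\ms X_{g,n}[2]\cong(\bb Z/2\bb Z)^{2g}_{\ms A_{g,n}}$. The standard quadratic refinement $q_0$ of the standard symplectic form on $(\bb Z/2\bb Z)^{2g}$ is then a constant, hence monodromy-invariant, section of the local system of quadratic refinements of $\textbf{e}_2^{\Lambda}$. Through the family version of Mumford's correspondence, $q_0$ produces a symmetric line bundle on $\ms X_{g,n}$ inducing $\Lambda$, which settles existence and the ``can be chosen symmetric'' clause simultaneously. The point requiring care here is that the fibrewise choice dictated by $q_0$ glues to an honest line bundle on the stack, i.e.\ that the symmetric theta divisor attached to a theta characteristic varies algebraically in families; for this I would appeal to Mumford's construction of theta functions with characteristics.

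For the ``only if'' direction, suppose $n$ is odd and, for contradiction, that some $\ms M$ with $\lambda(\ms M)=\Lambda$ exists. The class $\tau:=\ms M\otimes[-1]^{*}\ms M^{-1}$ satisfies $\lambda(\tau)=0$, so it is a section of $\ms X_{g,n}^{\vee}\to\ms A_{g,n}$ and therefore lies in $(\bb Z/n\bb Z)^{2g}$ by Theorem \ref{strf0}. Since $n$ is odd, $2$ is invertible modulo $n$, so $\tau=\sigma^{\otimes2}$ for a section $\sigma$, and a direct check shows $\ms M':=\ms M\otimes\sigma^{-1}$ is symmetric and still induces $\Lambda$. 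Then $e_*^{\ms M'}$ is a global section of the local system of quadratic refinements of $\textbf{e}_2^{\Lambda}$, that is, a monodromy-invariant quadratic form. But for $n$ odd the monodromy of $\ms X_{g,n}[2]$ over $\ms A_{g,n}$ is the full group $Sp_{2g}(\bb F_2)$, which for $g\geq2$ acts transitively on even and on odd theta characteristics with each orbit of size $>1$, hence fixes no refinement. This contradiction rules out the existence of $\ms M$.

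I expect the main obstacle to be the ``only if'' direction, specifically the identification of the $2$-torsion monodromy and the absence of an invariant refinement: one must verify that $\pi_1(\ms A_{g,n})$ surjects onto $Sp_{2g}(\bb F_2)$ when $n$ is odd, via surjectivity of the congruence reduction $\Gamma(n)\to Sp_{2g}(\bb Z/2\bb Z)$, and that no quadratic refinement is invariant. This is where the genus hypothesis genuinely enters, the elliptic case $g=1$ failing precisely because the unique odd theta characteristic is fixed. A secondary technical point is the algebraicity in families invoked for the ``if'' direction; both inputs are essentially classical and citable, so the argument should close once they are in place.
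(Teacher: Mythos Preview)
Your ``if'' direction is essentially the paper's argument: the paper also reduces to the existence of a universal theta characteristic (Proposition \ref{thetacar2} establishes the correspondence between symmetric line bundles inducing the polarization and theta characteristics, via the p.s.a.t.\ interpretation of $\ms A_{g,n}$), and then exhibits one explicitly from the level-$2$ structure when $n$ is even. The gluing issue you flag is exactly what Proposition \ref{thetacar2} proves; the paper carries this out in detail via Mumford's normalized isomorphism $e^{\mt L}$ rather than citing it.

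Your ``only if'' direction, however, is genuinely different from the paper's and in fact more elementary and stronger. The paper argues that if such an $\ms M$ exists then $(\pi_*\ms M)^{-1}$ is, up to torsion, a square root of the Hodge bundle---proved by passing to $\ms A_{g,4n}$, invoking the Faltings--Chai formula $(\pi'_*\mt M)^8=(\det\pi'_*\Omega_{\pi'})^{-4}$, and a Grothendieck--Riemann--Roch computation---and then appeals to Putman's theorem that the Hodge bundle has no square root modulo torsion in $\Pic(\ms A_{g,n})$ when $n$ is odd and $g\geq4$. This is the sole source of the restriction $g\geq4$. Your route instead stays entirely within the theta-characteristic framework already set up for the ``if'' direction: you first symmetrize $\ms M$ using Theorem \ref{strf0} and the invertibility of $2$ in $(\bb Z/n\bb Z)^{2g}$ for $n$ odd, then read off a global theta characteristic $e_*^{\ms M'}$, and finally observe that for $n$ odd the monodromy of $\ms X_{g,n}[2]$ is all of $Sp_{2g}(\bb F_2)$ (by surjectivity of $\Gamma(n)\to Sp_{2g}(\bb Z/2\bb Z)$ when $\gcd(n,2)=1$), which for $g\geq 2$ fixes no quadratic refinement. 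This avoids GRR and Putman entirely and yields the statement for all $g\geq 2$, resolving the cases $g=2,3$, $n$ odd that the paper explicitly leaves open. The only points to pin down are the ones you already name: for $n=1,2$ you should pass to $\ms A_{g,3n}$ to run the monodromy argument on a genuine variety, and the family version of $e_*^{\ms M'}$ is precisely what the paper's proof of Proposition \ref{thetacar2} provides.
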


The above theorem allows us to conclude the description of the Picard group of the universal abelian variety.\\\\
\begin{dimo} \emph{of Theorem \ref{picxg}}. As in the proof of Theorem \ref{corxg}, we fix a Jacobian variety $J(C)$ of a smooth curve of genus $g$ with Neron-Severi group generated by the theta divisor $\theta$. By Proposition \ref{incl} with $(A,\lambda,\varphi)=(J(C),\theta,\varphi)$, it is enough to compute the index of the image of the map
$\Pic(\ms X_{g,n})\to NS(J(C))=\bb Z[\theta]$.
By Remark \ref{oss}, the subgroup generated by $\mt L_{\Lambda}$ has index two in $NS(J(C))$. So the theorem follows from Theorem \ref{polex}.
\end{dimo}\\

The rest of the section is devoted to prove Theorem \ref{polex}. 

\begin{Oss}
The sufficient condition is well-known when $\ms A_{g,n}$ is a variety (i.e. $n\geq 3$): see for example the survey of Grushevsky and Hulek (\cite[Section 1]{GH13}) for a good introduction, following \cite{Igu72}. Due to the ignorance of the authors, it is not clear if the results can be extended to the remaining case $n=2$, using the same arguments of \emph{loc. cit.} For this reason, we give a new proof of this fact, following the arguments of Shepherd-Barron in \cite[\S 3.4]{SB08}. Such proof works also when $2\leq g\leq 4$.

Instead the proof of the necessary condition uses a result of Putman  \cite{Pu2012}, which implies that the Hodge line bundle does not admit roots on the Picard group of $\ms A_{g,n}$ (modulo torsion) when $n$ is odd and $g\geq 4$. Anyway, by the remarks that follow \cite[Theorem E]{Pu2012}, it seems that the same holds also in genus two and three, but we do not have any reference of this. For this reason, in this section, we will assume $g$ greater than three.
\end{Oss}
First we will resume some results and definitions from \cite[\S 3.4]{SB08}.

\begin{Def}An \emph{abelian torsor} $(A\curvearrowright P\to S)$ is a projective scheme $P$ over $S$ which is a torsor under an abelian scheme $A\to S$. An abelian torsor is \emph{symmetric} if the action of $A$ on $P$ extends to an action of the semi-direct product $A\rtimes (\mathbb Z/2\mathbb Z)_S$ where $(\mathbb Z/2\mathbb Z)_S$ acts as the involution $i$ on $A$. We will denote with $Fix_P$ the closed subscheme of $P$ where $i$ acts trivially. Note that $Fix_P$ is a torsor under the subscheme $A[2]\subset A$ of the $2$-torsion points.
\end{Def}

The (fppf) sheaf $Pic^{\tau}_{P/S}$ of line bundles, which are the numerically trivial line bundles on each geometric fiber, is represented by the dual abelian scheme $A^{\vee}$. In particular, an ample line bundle $\mt M$ on an abelian torsor $P\to S$ defines a polarization $\lambda$ on $A$ by sending a point $a\in A$ to $t_a^*\mt M\otimes\mt M^{-1}\in Pic^{\tau}_{P/S}=A^{\vee}$, where $t_a:P\to P$ is the translation by $a$.

\begin{Def}A relative effective divisor $D$ on the abelian torsor $(A\curvearrowright P\to S)$ is \emph{principal} if the line bundle $\oo(D)$ defines a principal polarization on $A$. A \emph{principal symmetric abelian torsor (p.s.a.t.)} is a symmetric abelian torsor with an effective principal divisor that is symmetric, i.e. it is $i$-invariant as hypersurface. A \emph{level $n$-structure} on $(A\curvearrowright P\to S)$ is a level $n$-structure $\varphi:A[n]\cong \left(\mathbb Z/n\mathbb Z\right)^{2g}_S$ on $A$.
\end{Def}

Adapting the Alexeev's idea \cite{Al02} of identifying the stack of p.p.a.v with the stack of torsors with a suitable divisor, Shepherd Barron in \cite{SB08} proves that the stack $\ms A_{g}$ is isomorphic to the stack of principal symmetric abelian torsors (p.s.a.t). The proof works also if we add the extra datum of the level structure, obtaining the following

\begin{Prop}\cite[Proposition 2.4]{SB08}. The stack $\ms A_{g,n}$ is isomorphic to the stack whose objects over a scheme $S$ are the principal symmetric abelian torsors with level $n$-structure. A morphism between two objects $$(f,g,h):(A\curvearrowright P\to S, D, \varphi)\rightarrow (A'\curvearrowright P'\to S', D', \varphi')$$ are two cartesian diagrams
$$\xymatrix{
P\ar[d]\ar[r]^{g} &P'\ar[d]\\
S\ar[r]^h &S'
}\quad
\xymatrix{
A\ar[d]\ar[r]^{f} &A'\ar[d]\\
S\ar[r]^h &S'
}$$
where $(f,g,h)$ is a morphism of abelian torsors such that $\varphi'\circ f|_{A[n]}=\left(\text{Id}_{(\bb Z/n\bb Z)^{2g}}\times h\right)\circ\varphi$ and $g^{-1}(D')=D$.
\end{Prop}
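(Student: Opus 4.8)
The plan is to realize the claimed isomorphism as a pair of mutually quasi-inverse morphisms of categories fibered in groupoids over $(Sch/\bb C)$, taking the level-free statement \cite[Proposition 2.4]{SB08} as a black box and checking that the extra datum $\varphi$ rides along formally. Write $\ms P_{g,n}$ for the stack of principal symmetric abelian torsors with level $n$-structure. The easy direction is the functor $\ms P_{g,n}\to\ms A_{g,n}$ sending $(A\curvearrowright P\to S,D,\varphi)$ to $(A\to S,\lambda_D,\varphi)$, where $\lambda_D$ is the polarization attached to the ample sheaf $\oo_P(D)$ by the recipe recalled just before the statement, namely $a\mapsto t_a^*\oo_P(D)\otimes\oo_P(D)^{-1}\in Pic^{\tau}_{P/S}=A^{\vee}$. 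The hypothesis that $D$ is principal is exactly what forces $\lambda_D$ to be a principal polarization, and since $\varphi$ is by definition already a level $n$-structure on the underlying abelian scheme $A$, it is carried over untouched. First I would check functoriality in $S$ and compatibility with morphisms: a morphism of torsors $(f,g,h)$ with $g^{-1}(D')=D$ gives $g^*\oo_{P'}(D')\cong\oo_P(D)$, whence the functoriality of the polarization attached to an ample sheaf yields $f^{\vee}\circ\lambda'\circ f=\lambda$, while the condition on $\varphi$ is literally identical on the two sides.

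For the quasi-inverse $\ms A_{g,n}\to\ms P_{g,n}$ I would invoke the construction of Alexeev \cite{Al02} as refined by Shepherd-Barron: from $(A\to S,\lambda)$ one produces a canonical symmetric $A$-torsor $P$, together with its involution and fixed locus $Fix_P$ (a torsor under $A[2]$), equipped with a canonical symmetric principal divisor $D$ whose attached polarization is $\lambda$. The essential feature is that this torsor is defined globally over the whole base $S$, even though a symmetric theta divisor on $A$ \emph{itself} need not exist globally — that failure is precisely the phenomenon measured by Theorem \ref{polex}, so the passage to a torsor is exactly what makes the polarization datum canonical and base-independent. Because the construction leaves the abelian scheme $A$, and hence its $n$-torsion $A[n]$, unchanged, the level structure $\varphi$ transports verbatim to give an object $(A\curvearrowright P,D,\varphi)$ of $\ms P_{g,n}$.

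It then remains to exhibit the two natural isomorphisms witnessing that these functors are quasi-inverse. The composite $\ms A_{g,n}\to\ms P_{g,n}\to\ms A_{g,n}$ returns $(A,\lambda_D,\varphi)=(A,\lambda,\varphi)$ since $D$ was built to induce $\lambda$, and the composite in the other order recovers $(P,D)$ up to unique isomorphism by the same construction; both identifications are already supplied by \cite[Proposition 2.4]{SB08} at the level of the underlying pairs $(A,\lambda)\leftrightarrow(A\curvearrowright P,D)$. Since every construction above fixes $A$, the level structure — the same datum $\varphi:A[n]\cong(\bb Z/n\bb Z)^{2g}_S$ on both sides, subject to the identical compatibility $\varphi'\circ f|_{A[n]}=(\op{Id}\times h)\circ\varphi$ under morphisms — matches tautologically. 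In effect both stacks are obtained from the $n=1$ situation by adjoining the very same discrete groupoid of symplectic level structures on the common $A$, so the equivalence of \cite{SB08} lifts directly to the level-$n$ equivalence claimed here.

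The one genuine obstacle is the backward functor: constructing the symmetric torsor $P$ with its involution, its $A[2]$-torsor $Fix_P$, and its symmetric principal divisor $D$ canonically and functorially from $(A,\lambda)$, and verifying that these glue over $S$ independently of étale-local choices. This is the heart of Alexeev's idea and of \cite[Proposition 2.4]{SB08}, which I would cite rather than reprove; everything else — the easy forward functor, the matching of morphisms, and the transport of $\varphi$ — is routine bookkeeping once that canonical construction is in hand.
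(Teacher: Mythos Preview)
Your proposal is correct and matches the paper's approach exactly: the paper does not prove this proposition at all but simply cites \cite[Proposition 2.4]{SB08} for the level-free case and remarks in one sentence that ``the proof works also if we add the extra datum of the level structure.'' Your write-up is in fact more detailed than what the paper provides, but the underlying idea --- take Shepherd-Barron's equivalence as a black box and observe that the level structure $\varphi$ lives on the common abelian scheme $A$ and so transports tautologically --- is identical.
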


Let $\ms N_{g,n}$ be the stack of the p.p.a.v. with a symmetric divisor defining the polarization. The forgetful functor $\ms N_{g,n}\to \ms A_{g,n}$ is a $\ms X_{g,n}[2]$-torsor. In particular, the universal abelian variety $\ms X_{g,n}\to\ms A_{g,n}$ admits a universal symmetric divisor inducing the universal polarization if and only if the torsor $\ms N_{g,n}\to \ms A_{g,n}$ admits a section. We can identify the stack $\ms N_{g,n}$ with the stack of 4-tuples $$(A\curvearrowright P\to S, D,\varphi,x)$$ where $(A\curvearrowright P\to S, D)$ is p.s.a.t with a level $n$-structure $\varphi$ and $x$ is a section of $Fix_P\to S$. A morphism between two objects is a morphism of $\ms A_{g,n}$ compatible with the section of the $i$-invariant locus. Using this interpretation, if we call $\ms P_{g,n}$ the universal abelian torsor on $\ms A_{g,n}$, our problem is equivalent to showing that the $\ms X_{g,n}[2]$-torsor $Fix_{\ms P_{g,n}}\to \ms A_{g,n}$ has a section.\vspace{0.1cm}

We are now going to give another modular description of $\ms N_{g,n}$ in terms of theta characteristics.

\begin{Def}
Let $(A\curvearrowright P\to S, D)$ be a p.s.a.t. and $\lambda:A\cong A^{\vee}$ the principal polarization induced by $\oo(D)$. Let $\mt T_P$ be the subsheaf of $ Hom_S(A[2],\mu_{2,S})$ of morphisms $c$ such that $$c(a)c(b)c(a+b)=\textbf{e}^{\lambda}_2(a,b)$$ for any $a,b\in A[2]$. Any morphism with this property will be called \emph{theta characteristic} of the p.s.a.t. $(A\curvearrowright P\to S, D)$.
The sheaf $\mt T_P$ is a torsor under the action of $A[2]$: $b.c(a)=e_2^{\lambda}(b,a)c(a)$ for $a,b\in A[2]$ and $t\in\mt T_P$. We will call $\mt T_P$ the \emph{torsor of theta characteristics} of the p.s.a.t $(A\curvearrowright P\to S, D)$.
\end{Def}

We have the following

\begin{Prop}\label{thetacar2}The stack $\ms N_{g,n}$ is isomorphic to the stack $\ms T_{g,n}$ which parametrizes the $4$-tuples $(A\curvearrowright P\to S, D,\varphi, c)$ where $(A\curvearrowright P\to S, D)$ is a p.s.a.t. over $S$ with a level $n$-structure $\varphi$ and theta characteristic $c\in \mt T_P$. A morphism between two objects $$(f,g,h):(A\curvearrowright P\to S, D, \varphi,c)\rightarrow (A'\curvearrowright P'\to S', D', \varphi',c')$$ is a morphism on $\ms A_{g,n}$ such that $c'\circ f|_{A[2]}=\left(\text{Id}_{\mu_{2}}\times h\right)\circ c$.
\end{Prop}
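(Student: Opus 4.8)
The plan is to exploit the fact that both stacks are, by construction, $\ms X_{g,n}[2]$-torsors over $\ms A_{g,n}$: over a p.s.a.t. $(A\curvearrowright P\to S, D,\varphi)$ the fibre of $\ms N_{g,n}$ is the $A[2]$-torsor $Fix_P$, while the fibre of $\ms T_{g,n}$ is the $A[2]$-torsor $\mt T_P$. Since a morphism of torsors under one and the same group scheme is automatically an isomorphism, it suffices to produce a single morphism of torsors $Fix_P\to\mt T_P$ that is natural in the p.s.a.t. and compatible with the level structures; functoriality will then upgrade it to an isomorphism of stacks over $\ms A_{g,n}$.

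The construction of the map goes through Mumford's theory of symmetric line bundles. First I would use the fixed point to rigidify the torsor: given $x\in Fix_P(S)$, the orbit map $\tau_x:A\to P$, $a\mapsto a\cdot x$, is an $S$-isomorphism carrying the involution $i$ of $P$ to the inversion on $A$ and the symmetric divisor $D$ to a symmetric relative divisor $\tau_x^*D$ on $A$. Thus $\mt M_x:=\oo_A(\tau_x^*D)$ is a symmetric line bundle inducing the principal polarization $\lambda$, and Mumford's normalised isomorphism $\psi_x:i^*\mt M_x\xrightarrow{\sim}\mt M_x$ (the one restricting to the identity along the zero section) is canonically attached to it. Because $i$ fixes every $a\in A[2]$, restricting $\psi_x$ to the fibre over $a$ yields a scalar $c_x(a)\in\mu_{2,S}$, and $a\mapsto c_x(a)$ defines a section $c_x$ of $\underline{\Hom}_S(A[2],\mu_{2,S})$. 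This $c_x$ is the theta characteristic I would attach to $x$.

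Two verifications remain, and both can be reduced to the classical statements over a field by checking on geometric fibres, since $A[2]$ and $\mu_{2,S}$ are finite \'etale and the relevant identities are closed conditions. The first is that $c_x$ actually lies in $\mt T_P$, i.e. $c_x(a)c_x(b)c_x(a+b)=\textbf{e}_2^{\lambda}(a,b)$; fibrewise this is exactly Mumford's identity expressing the quadratic refinement of the Weil pairing attached to a symmetric line bundle (see \cite{Mum70}), combined with the fact that every element of $\mu_2$ is its own inverse. The second is equivariance: replacing $x$ by $b\cdot x$ with $b\in A[2]$ replaces $\tau_x$ by $\tau_x\circ t_b$, hence $\mt M_x$ by $t_b^*\mt M_x$, and the classical transformation law under translation by a $2$-torsion point multiplies $c_x$ by $\textbf{e}_2^{\lambda}(b,-)$ (note $b=-b$, so no sign ambiguity arises). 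This is precisely the torsor action $b.c(a)=\textbf{e}_2^{\lambda}(b,a)c(a)$ defining $\mt T_P$, so $x\mapsto c_x$ is $A[2]$-equivariant. Compatibility with morphisms and with the level structure $\varphi$ is then formal, as $c_x$ is built functorially out of $(P,D,x)$.

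The main obstacle I anticipate is purely technical and lies in the relative setting: one must know that the normalised isomorphism $\psi_x$, and therefore $c_x$, exists as a genuine datum over the base $S$ and not merely fibrewise, so that $c_x$ is a well-defined $S$-point of $\underline{\Hom}_S(A[2],\mu_{2,S})$. This is supplied by the relative theory of symmetric sheaves on abelian schemes; once it is in place, every equality needed is checkable on geometric points and reduces to Mumford's computations and to the torsor formalism already set up in \cite{SB08}. Granting this, the natural transformation $Fix_P\to\mt T_P$ is a morphism of $A[2]$-torsors, hence an isomorphism, and letting $S$ and the p.s.a.t. vary yields the desired isomorphism $\ms N_{g,n}\cong\ms T_{g,n}$ over $\ms A_{g,n}$.
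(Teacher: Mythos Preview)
Your proposal is correct and follows essentially the same route as the paper: both reduce to constructing a functorial $A[2]$-equivariant map $Fix_P\to\mt T_P$ by sending a fixed point $x$ to Mumford's quadratic function $e^{\tau_x^*\oo(D)}$ attached to the pulled-back symmetric line bundle, then verify the theta-characteristic identity and the translation law via the standard properties of $e^{\mt L}$. The only cosmetic difference is that you invoke a fibrewise reduction to Mumford's classical statements, whereas the paper records the relative versions of properties (i)--(iv) directly and computes with them; both are adequate.
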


First of all, we recall some preliminaries facts. Let $A\to S$ be an abelian scheme with a symmetric line bundle. There exists a unique isomorphism $\varphi:\mt L\cong i^*\mt L$ such that $O_A^*\varphi$ is the identity. 
For any $x\in A[2](S)$ the isomorphism $x^*\varphi:x^*\mt L\cong x^*i^*\mt L=x^*\mt L$ is a multiplication by an element $e^{\mt L}(x)$ of $H^0(S,\oo^*_S)$ and it satisfies the following properties
\begin{enumerate}[(i)]
\item $e^{\mt L}(O_A)=1_S$ and $e^{\mt L}(x)\in\mu_2(S)$ for any $x\in A[2](S)$.
\item $e^{\mt L\otimes \mt M}(x)=e^{\mt L}(x)\cdot e^{\mt M}(x)$ for any $x\in A[2](S)$ and for any symmetric line bundle $\mt M$.
\item $e^{t^*_y\mt L}(x)=e^{\mt L}(x+y)\cdot e^{\mt L}(y)$ where $t_y:A\to A$ is the traslation by $y\in A[2](S)$.
\item If $\mt L$ is such that $\mt L^2\cong \oo_A$ then $e^{\mt L}(x)=\textbf{e}_2(x,\mt L).$
\end{enumerate}
These properties (and their proofs) are slight generalizations of the ones in \cite[pp. 304-305]{Mum66}.\\\\
\begin{dimo}\emph{Proposition \ref{thetacar2}}. To prove the proposition it is enough to show that for any p.s.a.t. $(A\curvearrowright P\to S, D)$ there exists a canonical isomorphism $\phi$ of $A[2]$-torsors between $Fix_P$ and the torsor of theta characteristics $\mt T_P$, such that, for any morphism $
(f,g,h):(A\curvearrowright P\to S, D)\rightarrow (A'\curvearrowright P'\to S', D')
$ in $\ms A_{g}$, we have that $ \phi'(g(\delta))\circ f|_{A[2]}=\left(Id_{\mu_2}\times h\right)\circ\phi(\delta)$ for any $\delta\in Fix_P$.\vspace{0.2cm}

Let $(A\curvearrowright P\to S,D)$ be a p.s.a.t. Let $T$ an $S$-scheme and $\delta\in Fix_P(T)$. Then we have an isomorphism $\varphi_{\delta}:A_T\to P_T$, which sends $a$ to $t_a(\delta)$. Since $D$ is a symmetric divisor, the line bundle $\oo(D)$ is symmetric, i.e. there exists a canonical isomorphism $\oo(D)=\oo(i^{-1}(D))\cong i^*\oo(D)$ of line bundles on $P$. With abuse of notation, we will denote with the same symbol the pull-back on $P_T$ of the line bundle $\oo(D)$. The map $\varphi_{\delta}$ commute with the action of the involution, then we have a canonical isomorphism $\varphi_{\delta}^*\oo(D)\cong \varphi^*_{\delta}i^*\oo(D)\cong i^*\varphi^*_{\delta}\oo(D)$. This allows us to define a morphism of $T$-schemes
$$
\begin{array}{rcl}
c_{\delta}:A_T[2]&\longrightarrow&\mu_{2,T}\\
a &\longmapsto & c_{\delta}(a):=e^{\varphi_{\delta}^*\oo(D)}(a).
\end{array}
$$
Using the properties (i), (ii), (iii), (iv) of $e^{}$, we can see that $c_{\delta}$ is a theta characteristic. Indeed, let $a,b\in A_T[2]$, then
\begin{equation}
\begin{array}{rcl}
\textbf{e}_2^{\lambda}(a,b)&\stackrel{def}{=}&\textbf{e}_2\left(a,t_b^*\varphi_{\delta}^*\oo(D)\otimes\varphi_{\delta}^*\oo(D)^{-1}\right)\stackrel{(iv)}{=}e^{t_b^*\varphi_{\delta}^*\oo(D)\otimes\varphi_{\delta}^*\oo(D)^{-1}}(a)=\\
& \stackrel{(ii)}{=} &e^{t_b^*\varphi_{\delta}^*\oo(D)}(a)\cdot e^{\varphi_{\delta}^*\oo(D)^{-1}}(a).
\end{array}
\end{equation}
Moreover
\begin{equation}
e^{\varphi_{\delta}^*\oo(D)}(a)\cdot e^{\varphi_{\delta}^*\oo(D)^{-1}}(a)\stackrel{(ii)}{=}e^{\oo_{A_T}}(a)\stackrel{(iv)}{=}\textbf{e}_2(a,O_{A_T^{\vee}})=1_T.
\end{equation}
By (i), this implies that $e^{\varphi_{\delta}^*\oo(D)}(a)=e^{\varphi_{\delta}^*\oo(D)^{-1}}(a)$. Observe that by (iii) we have
\begin{equation}
e^{t_b^*\varphi_{\delta}^*\oo(D)}(a)=e^{\varphi_{\delta}^*\oo(D)}(a+b)\cdot e^{\varphi_{\delta}^*\oo(D)}(b).
\end{equation}
Putting all together, we see that $c_{\delta}$ is a theta characteristic:
\begin{equation}\label{final}
\textbf{e}_2^{\lambda}(a,b)=e^{\varphi_{\delta}^*\oo(D)}(a+b)\cdot e^{\varphi_{\delta}^*\oo(D)}(b)\cdot e^{\varphi_{\delta}^*\oo(D)}(a)\stackrel{def}{=}c_{\delta}(a+b)c_{\delta}(b)c_{\delta}(a),
\end{equation}
Such construction is compatible with the base changes $T'\to T$. In other words, it defines a morphism of functors
$$
\begin{array}{rcl}
\phi:Fix_P&\longrightarrow&\mt T_P\\
\delta &\longmapsto & c_{\delta}.
\end{array}
$$
Moreover, the properties (i), (ii), (iii), (iv) imply also that
$
c_{t_b(\delta)}(a)=\textbf{e}_2^{\lambda}(b,a)c_{\delta}(a)
$
for any $a,b\in A_T[2]$, or, in other words, that $\phi$ is an isomorphism of $A[2]$-torsors.

Indeed, by definition $c_{t_b(\delta)}(a)=e^{\varphi_{t_b(\delta)}^*\oo(D)}(a)$ for any $a,b\in A_T[2]$. Observe that $\varphi_{t_b(\delta)}=\varphi_\delta\circ t_b$ (by an abuse of notation we have denoted with the same symbols the translation by $b\in A$ on $P$ and on $A$). In particular
\begin{equation}
e^{\varphi_{t_b(\delta)}^*\oo(D)}(a)=e^{t_b^*\varphi_{\delta}^*\oo(D)}(a)\stackrel{(iii)}{=}e^{\varphi_{\delta}^*\oo(D)}(a+b)\cdot e^{\varphi_{\delta}^*\oo(D)}(b)\stackrel{def}{=}c_{\delta}(a+b)c_{\delta}(b)
\end{equation}
On the other hand,
\begin{equation}
\textbf{e}_2^{\lambda}(b,a)c_{\delta}(a)=c_{\delta}(a+b)c_{\delta}(b)c_{\delta}(a)c_{\delta}(a)\stackrel{(i)}=c_{\delta}(a+b)c_{\delta}(b)
\end{equation}
Showing that $\phi$ is an isomorphism of $A[2]$-torsors. The second assertion follows from the fact that for any morphism $(f,g,h):(A\curvearrowright P\to S, D)\rightarrow (A'\curvearrowright P'\to S', D')$ we have $\varphi_{g(\delta)}\circ f=g\circ \varphi_\delta$.
\end{dimo}\\
 
\begin{Oss}Let $(A\curvearrowright P\to S=\op{Spec}(k), D,\varphi)$ be a geometric point in $\ms A_{g,n}$. For any $k$-point $\delta$ of $Fix_P$, let $m(\delta)$ the multiplicity of the divisor $D$ at $\delta$. By \cite[Proposition 2]{Mum66}, we have $c_{\delta}(a)=(-1)^{m(t_a(\delta))-m(\delta)}$. 
\end{Oss}

The Proposition \ref{thetacar2} allows us to prove Theorem \ref{polex}.\\\\
\begin{dimo} \emph{of Theorem \ref{polex}.}\vspace{0.2cm}

$(\Leftarrow)$. As observed before, there exists a symmetric line bundle on $\ms X_{g,n}$ inducing the universal polarization if and only if the morphism of stacks $\ms N_{g,n}\to\ms A_{g,n}$ has a section. By Proposition \ref{thetacar2}, the existence of such a section is equivalent to showing that there exists a universal theta characteristic $c:\ms X_{g,n}[2]\to\mu_{2,\ms A_{g,n}}$. Since $n$ is even, the level $n$-structure $\varPhi$ induces a level $2$-structure $\widetilde\varPhi:\ms X_{g,n}[2]\to(\bb Z/2\bb Z)^{2g}_{\ms A_{g,n}}$. Let $\text{e}:(\bb Z/2\bb Z)_{\ms A_{g,n}}^{2g}\times (\bb Z/2\bb Z)_{\ms A_{g,n}}^{2g}\to\mu_{2,\ms A_{g,n}}$ the standard symplectic pairing and $\pi_1$ (resp. $\pi_2$) the endomorphism which sends $(x',x'')\in (\bb Z/2\bb Z)^{2g}$ to $(x',0)$ (resp. to $(0,x'')$). For any $a\in \ms X_{g,n}[2](S)$, consider the map $a\mapsto c(a):= \text{e}(\pi_1\circ\widetilde\varPhi(a),\pi_2\circ\widetilde\varPhi(a))$. If we show that $c$ is a theta characteristic, we have done. More precisely, we have to show that for any $a,b\in X_{g,n}[2]$ the morphism $c$ satisfies the equality
\begin{equation}\label{ff}
\textbf{e}^{\Lambda}(a,b)=c(a+b)c(a)c(b)
\end{equation}
Indeed, on the left hand side we have
\begin{equation}
\begin{array}{rcl}
\textbf{e}^{\Lambda}(a,b)&=&\text{e}(\widetilde\varPhi(a),\widetilde\varPhi(b))=\text{e}(\pi_1\circ\widetilde\varPhi(a)+\pi_2\circ\widetilde\varPhi(a),\pi_1\circ\widetilde\varPhi(b)+\pi_2\circ\widetilde\varPhi(b))=\\
&=&\text{e}(\pi_1\circ\widetilde\varPhi(a),\pi_1\circ\widetilde\varPhi(b)+\pi_2\circ\widetilde\varPhi(b))\cdot \text{e}(\pi_2\circ\widetilde\varPhi(a),\pi_1\circ\widetilde\varPhi(b)+\pi_2\circ\widetilde\varPhi(b))=\\
&=&\text{e}(\pi_1\circ\widetilde\varPhi(a),\pi_2\circ\widetilde\varPhi(b))\cdot e(\pi_2\circ\widetilde\varPhi(a),\pi_1\circ\widetilde\varPhi(b)).
\end{array}
\end{equation}
Instead, on the right hand side
\begin{equation}
\begin{array}{rcl}
c(a+b)c(a)c(b)&=&\text{e}(\pi_1\circ\widetilde\varPhi(a+b),\pi_2\circ\widetilde\varPhi(a+b))\cdot c(a)c(b)=\\
 &=&\text{e}(\pi_1\circ\widetilde\varPhi(a),\pi_2\circ\widetilde\varPhi(a+b))\cdot \text{e}(\pi_1\circ\widetilde\varPhi(b),\pi_2\circ\widetilde\varPhi(a+b))\cdot c(a)c(b)=\\
 & =&\left(c(a)\cdot \text{e}(\pi_1\circ\widetilde\varPhi(a),\pi_2\circ\widetilde\varPhi(b))\right)\cdot\left(\text{e}(\pi_1\circ\widetilde\varPhi(b),\pi_2\circ\widetilde\varPhi(a))\cdot c(b)\right)\cdot c(a)c(b)
\end{array}
\end{equation}
Using these two equalities and the fact that we are working on $\mu_2$, the condition (\ref{ff}) follows immediately.\\

$(\Rightarrow)$ Fix $n>0$. Suppose that there exists a line bundle $\mt L$ on $\pi:\ms X_{g,n}\to\ms A_{g,n}$ inducing the universal polarization. Up to tensoring with a line bundle from $\ms A_{g,n}$, we can suppose that $\mt L$ is trivial along the zero section.\\ 
\textbf{Claim}: The line bundle $(\pi_*\mt L)^{-1}$ is, up to torsion, a square-root of the Hodge line bundle in the Picard group of $\ms A_{g,n}$.

Suppose that the claim holds. If $n$ is odd and $g\geq 4$ the Hodge line bundle does not admit a square root in $\Pic(\ms A_{g,n})$ modulo torsion (see \cite[Theorem E and Theorem 5.4]{Pu2012}). Therefore $n$ must be even.

It remains to prove the claim. Consider the cartesian diagram
$$
\xymatrix{
\ms X_{g,4n}\ar[r]^\phi\ar[d]^{\pi'} &\ms X_{g,n}\ar[d]^\pi\\
\ms A_{g,4n}\ar[r]^{\phi'} &\ms A_{g,n}
}
$$
By what we have already proved before, on $\ms X_{g,4n}$ there exists a rigidified symmetric line bundle $\mt M$ inducing the universal polarization. By \cite[Theorem 5.1, p.25]{FC90}, $\pi'_*\mt M$ is a line bundle such that 
\begin{equation}\label{fc}
(\pi'_*\mt M)^8=\left(\pi'_{*}(\Omega_{\pi'})\right)^{-4}\in \Pic(\ms A_{g,4n}).
\end{equation}
In particular $(\pi'_*\mt M)^{-1}$ is, up to torsion, a square-root of the Hodge line bundle. By Corollary \ref{incl}, we have that $\phi^*\mt L\otimes \mt P=\mt M$, where $\mt P$ is a rigidified $4n$-root line bundle. Since $\mt L$ (resp. $\mt M$) is relative ample over $\ms A_{g,n}$ (resp. $\ms A_{g,4n}$), we have that $\pi_!\mt L=\pi_*\mt L$ (resp. $\pi'_!\mt M=\pi'_*\mt M$) (see \cite[Prop. 6.13(i), p. 123]{Mum94}. Applying the Grothendieck-Riemann-Roch theorem to the morphism $\pi'$, we have the following equalities (in the rational Chow group of divisors of $\ms A_{g,4n}$)
\begin{equation}\label{GRR}
\begin{array}{rl}
c_1(\pi'_*\mt M)&=c_1(\pi'_*(\phi^*\mt L\otimes \mt P))=[\op{ch}(\pi'_*(\phi^*\mt L\otimes \mt P))]_1=\pi'_*\left(\left[\op{ch}(\phi^*\mt L)\op{ch}(\mt P)\op{Td}(\Omega^{\vee}_{\pi'})\right]_{g+1}\right)=\\
&=\pi'_*\left(\sum_{k=0}^{g+1}\frac{c_1(\mt P)^k}{k!}\left[\op{ch}(\phi^*\mt L)\op{Td}(\Omega^{\vee}_{\pi'})\right]_{g+1-k}\right)=\\
&=\pi'_*\left(\left[\op{ch}(\phi^*\mt L)\op{Td}(\Omega^{\vee}_{\pi'})\right]_{g+1}\right)= [\op{ch}(\pi'(\phi^*\mt L))]_1=c_1(\pi'_*\phi^*\mt L).
\end{array}
\end{equation}
The equality between the second and third row follows from the fact that $c_1(\mt P)^k$ (for $k\neq 0$) is a torsion element. Since $\ms A_{g,4n}$ is a smooth variety, the first Chern class map $c_1:\Pic(\ms A_{g,4n})\to \op{CH}^1(\ms A_{g,4n})$ is an isomorphism. This fact together with (\ref{fc}) and (\ref{GRR}) implies that
$(\pi_*'\phi^*\mt L)^{-1}=(\phi'^*\pi_*\mt L)^{-1}$ is, up to torsion, a root of the Hodge line bundle in $\Pic(\ms A_{g,4n})$. Since the homomorphism $\phi'^*:\Pic(\ms A_{g,n})/\Pic(\ms A_{g,n})_{\text{Tors}}\to\Pic(\ms A_{g,4n})/\Pic(\ms A_{g,4n})_{\text{Tors}}$ is injective, the claim follows.\\
\end{dimo}

\end{document}